\newtheorem{theorem}{Theorem}[section]
\newtheorem{proposition}[theorem]{Proposition}
\newtheorem{lemma}[theorem]{Lemma}
\newtheorem{corollary}[theorem]{Corollary}
\theoremstyle{definition}
\newtheorem{definition}[theorem]{Definition}
\newtheorem{example}[theorem]{Example}
\newcommand{\Dav}{\mathsf{D}}
\newcommand{\N}{\mathbb{N}}
\newcommand{\Z}{\mathbb{Z}}
\newcommand{\rank}{\mathsf{r}}
\newcommand{\fc}{\mathcal{F}}
\begin{document}

\title{Remarks on the plus-minus weighted Davenport constant}

\author{Luz E. Marchan \and Oscar Ordaz \and Wolfgang A. Schmid}

\address{Departamento de Matem\'aticas, Decanato de Ciencias y Tecnolog\'{i}as, Universidad Centroccidental Lisandro Alvarado, Barquisimeto, Venezuela}
\address{Escuela de Matem\' aticas y Laboratorio MoST, Centro ISYS, Facultad de Ciencias,
Universidad Central de Venezuela, Ap. 47567, Caracas 1041--A, Venezuela}
\address{Universit\'e Paris 13, Sorbonne Paris Cit\'e, LAGA, CNRS, UMR 7539, Universit\'e Paris 8, F-93430, Villetaneuse, France}

\email{luzelimarchan@gmail.com}
\email{oscarordaz55@gmail.com}
\email{schmid@math.univ-paris13.fr}

\thanks{The research of O. Ordaz is supported by Banco Central de Venezuela and Postgrado de la Facultad de Ciencias de la U.C.V. and CDCH project number 03-8018-2011-1; the one of W.A. Schmid by the PHC Amadeus 2012 project number 27155TH and the ANR project Caesar, project number ANR-12-BS01-0011.}

\begin{abstract}
For $(G,+)$ a finite abelian group the plus-minus weighted Davenport constant, denoted $\mathsf{D}_{\pm}(G)$, is the smallest $\ell$ such that each sequence $g_1 \dots g_{\ell}$ over $G$ has a weighted zero-subsum with weights $+1$ and $-1$, i.e., there is a non-empty subset $I \subset \{1,\dots, \ell\}$ such that $\sum_{i \in I} a_i g_i =0$ for $a_i \in \{+1,-1\}$. We present new bounds for this constant, mainly lower bounds, and also obtain the exact value of this constant for various additional types of groups.
\end{abstract}

\maketitle

\section{Introduction}

A zero-sum problem over a finite abelian group $G$ typically asks for the smallest value $\ell$ such that each sequence $g_1 \dots g_{\ell}$ of elements of $G$ has a (non-empty) subsequence whose terms sum to $0$ (and that possibly fulfills some additional condition), i.e., there is some non-empty subset $I \subset \{1, \dots, \ell\}$ such that $\sum_{i\in I} g_i = 0$ (and there might be additional conditions, e.g., on the cardinality of $I$). In case no additional condition is imposed this $\ell$ is called the Davenport constant of $G$, denoted by $\Dav(G)$. For a general overview on these types of problems see the survey article by Gao and Geroldinger \cite{gaogersurvey}.

There are different ways to generalize or modify these problems introducing weights. A particularly popular one introduced by Adhikari et al. \cite{adetal,adhi0} is to fix a certain set of weights $A$, a subset of the integers, and to ask for the existence of an $A$-weighted zero-subsum, i.e., $\sum_{i \in I}a_ig_i = 0$ for some non-empty subset $I \subset \{1, \dots, \ell\}$ and $a_i \in A$. The resulting constant is then called the $A$-weighted Davenport constant and is denoted by $\Dav_A(G)$.
That is, there is some specified set of admissible weights and one is free to use each weight as often as desired. There are also investigation in the situation where the  multiplicities for each weight are prescribed, see, e.g., \cite[Section 9]{gaogersurvey} for an overview; yet we do not consider this version of the problem.

In earlier literature, frequently, the constant $\mathsf{E}_A(G)$, defined in the same way except that the subset $I$ has to have cardinality $|G|$, was considered as well, yet by \cite{oscar-weighted-projectII} one knows that $\mathsf{E}_A(G) = \Dav_{A}(G) + |G| - 1$ so that the study of this invariant can be subsumed in the study of the weighted Davenport constant.

One particularly popular set to consider as sets of weights is the set $\{+1,-1\}$. This is not only due to the fact that it seems like a natural choice, but also since there are some applications. On the one hand, sequences or sets (but here this hardly makes a difference) without plus-minus weighted zero-subsum, they are called dissociated, play a role in other parts of additive combinatorics, in particular in certain Fourier-analytic arguments (see, e.g., \cite[Section 4.5]{taobook}). Moreover, the investigation of dissociated sets in the integers and in lattices is a classical problem as well; see for example \cite{levyuster} for a recent contribution. On the other hand, very recently Halter-Koch \cite{halterkoch} showed that the plus-minus weighted Davenport arises in questions on the norms of principal ideals in quadratic algebraic number fields---the relevant group is the class group and the presence of weights arises via the natural action of the Galois group on the class group---as well as questions related to binary quadratic forms.

In the present paper we focus almost exclusively on this set of weights.
We refer to $\Dav_{\{+1,-1\}}(G)$ as the plus-minus weighted Davenport constant and we denote it by $\Dav_{\pm}(G)$; this is mainly done for brevity of notation, yet also to hint at the fact that the results are not strictly limited to this set of weights, see Proposition \ref{prel_prop_gcd} and the subsequent discussion for details.

In a recent paper Adhikari, Grynkiewicz, and Sun \cite{adhikari-david} established the  following bounds for $\Dav_{\pm}(G)$. Let $G$ be isomorphic to $C_{n_1} \oplus \dots \oplus C_{n_r}$ with $n_i \mid n_{i+1}$ where $C_{n_i}$ denotes a cyclic group of order $n_i$. Then,
\begin{equation}
\label{eq_AGS}
\sum_{i=1}^r \lfloor \log_2 (n_i) \rfloor  +1 \le \Dav_{\pm}(G) \le \lfloor \log_2 |G| \rfloor +1.
\end{equation}

In special cases, such as the case that $G$ is cyclic or $G$ is $2$-group, the upper and the lower bound coincide and  thus yield the exact value of the plus-minus weighted Davenport constant.
A natural problem is to try to understand how the constant $\Dav_{\pm}(G)$ behaves in cases where the upper and lower bounds do not match.
To begin a detailed investigation of this question, which to the best of our knowledge so far has not been considered, is the main theme of the present paper.

On the one hand, we show two ways, quite different in nature, how the lower bound can be improved for certain groups.
To the best of our knowledge these are the first examples of such improvements.

The problem of improving this lower bound is somehow analogous to the problem of finding groups where the Davenport constant is larger then the lower bound given by $\Dav^{\ast}(G) = \sum_{i=1}^r (n_i-1) + 1$---note that both this lower bound and the one in \eqref{eq_AGS} are obtained via passing to cyclic groups where the exact value is known---for a recent contribution to that problem see \cite{glp}. Yet, in fact, we will argue that the lower bound in \eqref{eq_AGS} is not yet the `true analog' of $\Dav^{\ast}(G)$; and identifying the `true analog' of $\Dav^{\ast}(G)$ is the first way how we improve the lower bound. It goes without saying that `true analog' is not a precise notion and this assertion thus cannot be a definite statement, however in Section \ref{sec_revisited} there is some detailed justification for our point of view on this matter.
Having done this, we proceed to further explore this problem and give a construction that does not only consider the cyclic subgroups, and that yields further improvements in certain cases (see Section \ref{sec_MLB}).

On the other hand, we also give a first example of an improvement of the upper bound for groups other than elementary $3$-groups (for the relevance of this remark see Theorem \ref{thm_el3} and the subsequent discussion).

As a by-product of the improvement of the bounds (in particular the lower bound) we obtain the exact value of the plus-minus weighted Davenport constant for various new families of groups. In Sections \ref{sec_revisited} and \ref{sec_MLB} we focus on the methods, and some of the results are thus quite technical; yet, typically, we complement them with  some applications to specific types of groups. In addition, the final Section \ref{sec_rem} contains various explicit results, among others obtaining the exact value of the plus-minus weighted Davenport constant for all but one groups of order at most $100$.

\section{Preliminaries and notation}
\label{sec_prel}

In this section we gather some notation as well as some basic facts.
As usual we denote for a real number $x$ by $\lfloor x \rfloor = \max \{ z\in \Z \colon z \le x\}$ the floor of $x$ and we denote by $\{x\} = x - \lfloor x \rfloor$  its fractional part. By $\log_2 x$ we denote the logarithm in base $2$ of $x$.

We use additive notation for abelian groups.
For $n$ a positive integer, we denote by $C_n$ a cyclic group of order $n$.
For $G$ a nontrivial finite abelian group, where $1< n_1 \mid \dots \mid n_r$ such that
\[G \cong C_{n_1} \oplus \dots \oplus C_{n_r},\]
we denote $\exp(G)=n_r$ the exponent of $G$, $\rank(G)=r$ the rank of $G$ and for $n \in \N$ by $\rank_n(G)= |\{i \colon n \mid n_i  \}|$ the $n$-rank of $G$, and finally by $\rank^{\ast}(G) = \sum_{p \in \mathbb{P}} \rank_p (G)$, where $\mathbb{P}$ denotes the set of prime numbers, the total rank of $G$. A group is called a $p$-group if its exponent is a prime power and it is called elementary if its exponent is square-free.

A finite family of non-zero elements $(e_1, \dots, e_s)$  of $G$ is called independent if
$\sum_{i=1}^s a_i e_i = 0$ with $a_i \in \Z$ implies that $a_ie_i = 0$ for each $i$. An independent family whose elements are a generating set of $G$ is called a basis of $G$. For $(e_1, \dots, e_s)$ a basis of $G$, we know that $G = \oplus_{i=1}^s \langle e_i \rangle$. Thus, each element $g\in G$, can be written in a unique way as $g= g_1 + \dots + g_s$ where $g_i \in \langle e_i \rangle$, and of course each $g_i$ is equal to $a_ie_i$ for some integer $a_i$ uniquely determined modulo the order of $e_i$. We do not impose some global convention on the interval in which the $a_i$ are chosen, since at different occasion different conventions will be useful, and we make them clear locally.
  When a basis is fixed (explicitly or implicitly), we refer to $g_i=a_ie_i$ as $e_i$-coordinate of $g$.

For a subset $M$ of an (additive) abelian group, possibly the full group or a subset of the integers, and an integer $b$ we denote by $bM= \{bm \colon m \in M\}$ the dilation of $M$ by $b$; not the $b$-fold sumset of $M$.

Next, we recall some terminology related to sequences (in the way we use this term, which is common in the present context).
Let $G$ be a finite abelian group, written additively.
By a sequence over $G$ we mean an element of $\fc(G)$ the free abelian monoid over $G$; we use multiplicative notation for this monoid.
Let $S \in \fc(G)$. By definition this means there exist (up to ordering) uniquely determined $g_1, \dots, g_{\ell} \in G$ (some of them possibly equal) such that $S= g_1 \dots g_{\ell}$. The neutral element of $\fc(G)$, the empty sequence, would correspond to the empty product, i.e., $\ell = 0$.
A subsequence of $S$ is a divisor $T$ of $S$ in $\fc(G)$ ; in other words, a sequence of the form $\prod_{i\in I}g_i$ where $I \subset [1, \ell]$.

We call $\ell$ the length of the sequence and $\sigma(S) =  \sum_{i=1}^{\ell} g_i$ the sum of $S$. Moreover, $\Sigma (S) = \{ \sum_{i \in I}g_i \colon \emptyset \neq I  \subset [1,\ell]\} =  \{\sigma (T) \colon 1\neq T \mid S\}$ is called the set of subsums of $S$.

Let $A \subset \Z$ be a non-empty subset; if used in the context given below we refer to it as a set of weights.
We mainly are interested in the set
\[\Sigma_A (S) = \{ \sum_{i \in I}a_i g_i \colon \emptyset \neq I  \subset [1,\ell] , \, a_i \in A \},\]
called the set of $A$-weighted subsums of $S$; in \cite{oscar-weighted-projectII} an index-free definition of this set was given as well.
Sometimes we also need to consider just the elements arising from the sums corresponding to the sequence itself, i.e., $\sum_{i=1}^{\ell}a_i g_i$ with $a_i\in A$; we call such an element an $A$-weighted sum of $S$.

A map $f:G \to H$ between finite abelian groups yields in a natural way a map, indeed a monoid homomorphism, between $\mathcal{F}(G)$ and $\mathcal{F}(H)$ that we also denote by $f$. In case $f:G \to H$ is a group homomorphism, we have $\sigma(f(S))= f(\sigma(S))$ and $f(\Sigma_A(S))= \Sigma_A(f(S))$ for $S \in \mathcal{F}(G)$ and $A\subset \mathbb{Z}$ a set of weights.

We now recall more formally the definition of $A$-weighted Davenport constants.
\begin{definition}
Let $G$ be a finite abelian group and let $A\subset \mathbb{Z}$.
The $A$-weighted Davenport constant of $G$, denoted $\Dav_A(G)$, is the smallest $\ell \in \N$ such that for each $S \in \fc(G)$ with $|S| \ge \ell$ one has
\(0 \in \Sigma_A(S),\)
or in other words the smallest $\ell$ such that each sequence of that length has a non-empty subsequence $T$ such that $0$ is an $A$-weighted sum of $T$.
\end{definition}

Of course, the case $A= \{1\}$ yields the classical version of the Davenport constant.
It is immediate that for $n= \exp(G)$ and $\pi: \Z \to \Z/n \Z$ the standard map,  for $A, A' \subset \Z$ one has: if $\pi (A) = \pi(A')$, then
$\Sigma_A(S)= \Sigma_{A'}(S)$ and thus in particular
$\Dav_A(G)= \Dav_{A'}(G)$.

Moreover, for $b \in \Z$ one has that
$\Sigma_{b  A}(S)= b \Sigma_{A}(S)$.
Thus, if $b$ is coprime to $n$, since then multiplication by $b$ is an automorphism of $G$, one has $\Dav_A(G)= \Dav_{b  A}(G)$.
To extend this result to multiplication by any integer requires slightly more care; we give the main part of the argument in the following lemma.

\begin{lemma}\label{prel_lem_gcd}
Let $G$ be a finite abelian group with exponent $n$. Let $A \subset \Z$ be a non-empty set, and $d$ be a divisor of $\exp(G)$.
Then $\Dav_{d  A}( G ) = \Dav_{A}(dG)$.
\end{lemma}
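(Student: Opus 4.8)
The plan is to prove the two inequalities $\Dav_{dA}(G) \le \Dav_A(dG)$ and $\Dav_{dA}(G) \ge \Dav_A(dG)$ separately, by relating sequences over $G$ to sequences over the subgroup $dG$, using the observation that the $dA$-weighted subsums of a sequence over $G$ live inside $dG$.

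For the inequality $\Dav_{dA}(G) \le \Dav_A(dG)$: let $\ell = \Dav_A(dG)$ and let $S = g_1 \dots g_\ell$ be any sequence over $G$ of length $\ell$. I would like to reduce to a sequence over $dG$, but the $g_i$ need not lie in $dG$. The key point is that for weights in $dA$ one has $\sum_{i \in I} (d a_i) g_i = \sum_{i \in I} a_i (d g_i)$, so what matters is the sequence $dS := (dg_1)\dots(dg_\ell)$ over $dG$, and $\Sigma_{dA}(S) = \Sigma_A(dS)$. Since $|dS| = \ell = \Dav_A(dG)$, there is a nonempty $T \mid dS$ with $0$ an $A$-weighted sum of $T$, hence $0 \in \Sigma_A(dS) = \Sigma_{dA}(S)$. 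As $S$ was arbitrary of length $\ell$, this gives $\Dav_{dA}(G) \le \ell$. (One should note that passing from $S$ to $dS$ is exactly the monoid homomorphism induced by multiplication-by-$d$ on $G$, and $\Sigma_{dA}(S) = d\,\Sigma_A(S)$ combined with the homomorphism property $f(\Sigma_A(S)) = \Sigma_A(f(S))$ recalled in the preliminaries makes this clean; the mild subtlety is that $d$ is not coprime to $n$, so multiplication by $d$ is not an automorphism, which is precisely why we get $dG$ and not $G$.)

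For the reverse inequality $\Dav_{dA}(G) \ge \Dav_A(dG)$: set $m = \Dav_A(dG) - 1$, so there is a sequence $S' = h_1 \dots h_m$ over $dG$ with $0 \notin \Sigma_A(S')$. Write each $h_j = d g_j$ for some $g_j \in G$ (possible since $h_j \in dG$), and form $S = g_1 \dots g_m$ over $G$. Then $\Sigma_{dA}(S) = d\,\Sigma_A(S)$; more to the point, $\Sigma_{dA}(S) = \{\sum_{i\in I} a_i (d g_i) : \emptyset \ne I \subseteq [1,m],\ a_i \in A\} = \Sigma_A(S')$, which does not contain $0$. Hence $S$ is a sequence over $G$ of length $m$ with no $dA$-weighted zero-subsum, so $\Dav_{dA}(G) > m$, i.e. $\Dav_{dA}(G) \ge \Dav_A(dG)$.

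The argument is essentially bookkeeping; the only genuine point to get right is the identity $\Sigma_{dA}(S) = \Sigma_A(dS)$ for a sequence $S$ over $G$, where $dS$ denotes the image of $S$ under multiplication by $d$ (a sequence over $dG$), together with the fact that every sequence over $dG$ arises as $dS$ for some $S$ over $G$. I expect the main (minor) obstacle to be purely notational: being careful that $dA$ is a dilation and not a sumset (as flagged in the preliminaries), and that the reduction mod $\exp(G)$ remarks preceding the lemma are invoked correctly so that working with the actual integer weights $da_i$ versus their classes mod $n$ causes no trouble. No deeper idea seems needed.
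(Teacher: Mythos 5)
Your proposal is correct and uses the same two ingredients as the paper's proof: the identity $\Sigma_{dA}(S)=\Sigma_{A}(m_d(S))$ for the map $m_d:g\mapsto dg$, and the fact that every sequence over $dG$ lifts through $m_d$ to a sequence over $G$. The only (immaterial) difference is that for the inequality $\Dav_{dA}(G)\ge\Dav_A(dG)$ you lift an extremal zero-sum-free sequence, whereas the paper argues directly that every sufficiently long sequence over $dG$ admits the required subsum; these are equivalent packagings of the same argument.
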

\begin{proof}
Let $m_d: G \to G$ denote the map defined by $g \mapsto dg$. This is a group homomorphism and its image is $dG$.

First, let $S\in \fc(dG)$ of length $\Dav_{d  A}(G)$.
We need to show that $0 \in \Sigma_{A}(S)$.
Let $S'\in \fc(G)$ be a sequence such that $m_d(S')=S$.
By definition of $\Dav_{dA}(G)$ we get that there exist a subsequence $g_1'\dots g_t'\mid S'$ and sequence of weights $a_1', \dots, a_t'\in dA$ such
$\sum_{i=1}^t a_i' g_i' = 0$. By definition of $dA$ we know that each $a_i'$ equals $da_i$ for some $a_i \in A$.
Since
\[\sum_{i=1}^t a_i' g_i'  = \sum_{i=1}^t da_i g_i' = \sum_{i=1}^t a_i m_d(g_i') \]
and since $m_d(g_1') \dots m_d(g_t')$ is a subsequence of $S$, which is equal to $m_d(S')$ by definition of $S'$, we have shown that $0 \in \Sigma_{A}(S)$.

Now, conversely, let  $S'\in \fc(G)$ of length $\Dav_{ A}(dG)$.
The sequence $S= m_d(S')$ is a sequence over $dG$ and by the condition on its length we know that there is a subsequence $g_1\dots g_t \mid S$ and weights $a_i, \dots, a_t \in A$ such that
\[\sum_{i=1}^t a_i g_i = 0.\]
Let $g_1' \dots g_t'\mid S'$ be a subsequence such that $m_d(g_i') = g_i$ for every $i$.
Setting $a_i'= da_i$, we have
\[\sum_{i=1}^t a_i' g_i' = \sum_{i=1}^t a_ig_i =0, \]
and thus $0 \in \Sigma_{d A}(S')$, completing the argument.
\end{proof}

Combining the two preceding assertions we get the following result.

\begin{proposition}\label{prel_prop_gcd}
Let $G$ be a finite abelian group and $A  \subset \Z$ a non-empty subset, and $b$ an integer. Then
\[\Dav_{bA}(G) = \Dav_{A}(\gcd (b,\exp(G)) G ).\]
\end{proposition}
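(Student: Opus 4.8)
The plan is to deduce this from Lemma \ref{prel_lem_gcd} together with the two invariance properties recalled immediately before it (invariance of $\Dav_A$ under replacing $A$ by a set with the same image modulo $\exp(G)$, and invariance under dilating $A$ by an integer coprime to the exponent). Set $d = \gcd(b, \exp(G))$; then $d$ divides $\exp(G)$ as well as $b$, so we may write $b = dc$ with $c \in \Z$.

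First I would rewrite $bA = d(cA)$ as subsets of $\Z$ and apply Lemma \ref{prel_lem_gcd}, with the set of weights $cA$ and the divisor $d$ of $\exp(G)$, to obtain
\[\Dav_{bA}(G) = \Dav_{d(cA)}(G) = \Dav_{cA}(dG).\]
The crucial point is then that $\gcd(c, \exp(dG)) = 1$. Indeed, writing $\exp(G) = de$, the identity $\gcd(dc, de) = d\gcd(c,e)$ combined with $\gcd(b, \exp(G)) = d$ forces $\gcd(c, e) = 1$; and since $e \cdot (dg) = \exp(G)\, g = 0$ for every $g \in G$, we have $\exp(dG) \mid e$, whence $\gcd(c, \exp(dG)) = 1$. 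Applying the dilation-invariance to the group $dG$ and the integer $c$ now gives $\Dav_{cA}(dG) = \Dav_A(dG)$, and therefore
\[\Dav_{bA}(G) = \Dav_A(dG) = \Dav_A\bigl(\gcd(b, \exp(G))\, G\bigr),\]
as claimed.

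I do not expect a genuine obstacle here: once Lemma \ref{prel_lem_gcd} is available, the argument is just bookkeeping. The single delicate point is that one cannot dilate directly on $G$ by the cofactor $c$, since $c$ need not be coprime to $\exp(G)$; this is precisely why one first pushes into $dG$, whose exponent divides $\exp(G)/d$ and is thus coprime to $c$ by the choice of $d$. It is also worth recording the degenerate case $b = 0$, where $d = \exp(G)$, $c = 0$, and $dG$ is trivial, so that all invariants in sight equal $1$ and the identity holds trivially.
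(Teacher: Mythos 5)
Your proof is correct, and it follows the same basic strategy as the paper's: factor $b=dc$ with $d=\gcd(b,\exp(G))$, then combine Lemma \ref{prel_lem_gcd} with the dilation-invariance for coprime multipliers recalled just before that lemma. The one real difference is the order in which you apply the two ingredients, and your order is the safer one. The paper first applies the coprime-dilation invariance on $G$ itself, asserting that the cofactor $b'$ (your $c$) is coprime to $\exp(G)$; that assertion is false in general (take $\exp(G)=12$ and $b=8$: then $d=4$ and $b'=2$, which is not coprime to $12$). The paper's conclusion can still be rescued, since all $dA$-weighted sums land in $dG$ and multiplication by $b'$ is injective on $dG$, but as written the justification has a gap. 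You avoid this entirely by first pushing into $dG$ via Lemma \ref{prel_lem_gcd} and only then dilating by $c$, where the needed coprimality $\gcd(c,\exp(dG))=1$ genuinely holds --- and you prove it, via $\gcd(dc,de)=d\gcd(c,e)$ and $\exp(dG)\mid e$. So your ``single delicate point'' is exactly the right thing to worry about, and your handling of it (plus the remark on $b=0$) makes your write-up slightly more careful than the original.
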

\begin{proof}
Let $d = \gcd(\exp(G), b)$ and write $b = d b'$. By the preceding lemma we get that $\Dav_{dA}(G) = \Dav_{A}(d G )$ and by Lemma \ref{prel_lem_gcd} applied with $b'$, which is coprime to the exponent of $G$, we get $\Dav_{b' (d A)}(G)= \Dav_{dA}(G)$, establishing the claim as of course $b'  (d  A) = (b'd) A$.
\end{proof}

As a consequence of this proposition it suffices (essentially) to consider sets of weights $A$ whose greatest common divisor is $1$. In more detail, suppose $A$ is a set of weights and let $d= \gcd(A)$.
Then writing $A = d  A'$, with $\gcd(A')=1$, we get that $\Dav_A(G) = \Dav_{A'}(\gcd (d,\exp(G)) G  )$. Thus, the problem of determining $\Dav_A(G)$ for any $A$ is equivalent to the problem of determining $\Dav_{A'}(H)$ for some set $A'$ such that $\gcd A' = 1$ and some other group $H$, and indeed this group $H$ is rather simpler than the original group. Consequently, the main problem is to understand the problem for sets of weights that are coprime.

Finally, we see that the investigations of the present paper focused on the set of weights $\{+1,-1\}$ are in fact slightly more general in that they apply to any set of weights $A$ whose image $\overline{A}$ in $\Z/ \exp(G) \Z $ is of the form $\{-g,g\}$ for some $g \in \Z/\exp(G) \Z$; note that this does include the case that $-g=g$ though it is not of much interest, as it merely leads to considering the (classical) Davenport constant of elementary $2$-groups.

We make frequent use of the following lower bound (see \cite[Lemm 3.1]{oscar-weighted-projectII}).

\begin{lemma}\label{prel_lem_lb}
Let $G$ be a finite abelian group and let $A \subset \Z$ be a non-empty set. Then \[\Dav_A(G) \ge \Dav_A(G/H) + \Dav_A(H) - 1.\]
\end{lemma}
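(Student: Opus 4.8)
The plan is to realize a long zero-sum-free (in the $A$-weighted sense) sequence over $G$ by "gluing" an extremal sequence over the subgroup $H$ to the pullback of an extremal sequence over the quotient $G/H$. Let $\pi \colon G \to G/H$ denote the canonical projection. Choose $S_1 \in \fc(H)$ of length $\Dav_A(H) - 1$ with $0 \notin \Sigma_A(S_1)$, and choose $\overline{S_2} \in \fc(G/H)$ of length $\Dav_A(G/H) - 1$ with $0 \notin \Sigma_A(\overline{S_2})$; both exist by definition of the respective constants. Pick any $S_2 \in \fc(G)$ with $\pi(S_2) = \overline{S_2}$, so $S_2$ has length $\Dav_A(G/H) - 1$ as well. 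I would then show that the concatenation $S = S_1 S_2 \in \fc(G)$, which has length $\Dav_A(G/H) + \Dav_A(H) - 2$, satisfies $0 \notin \Sigma_A(S)$; this immediately yields $\Dav_A(G) \ge \Dav_A(G/H) + \Dav_A(H) - 1$.

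The core of the argument is the verification that $0 \notin \Sigma_A(S)$. Suppose for contradiction that some non-empty subsequence $T \mid S$ and weights in $A$ give a weighted sum equal to $0$. Write $T = T_1 T_2$ with $T_1 \mid S_1$ and $T_2 \mid S_2$, and let the corresponding $A$-weighted sum be $\sum a_i h_i + \sum b_j g_j = 0$ where the $h_i$ are terms of $T_1$ and the $g_j$ terms of $T_2$. Applying $\pi$ and using that $\pi$ is a group homomorphism with $H$ in its kernel, we get $\sum b_j \pi(g_j) = 0$ in $G/H$. Now $\pi(g_1)\dots$ is a subsequence of $\overline{S_2}$, so if $T_2$ is non-empty this is a non-trivial $A$-weighted subsum of $\overline{S_2}$ equal to $0$, contradicting the choice of $\overline{S_2}$. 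Hence $T_2$ must be empty, so $T = T_1$ is a non-empty subsequence of $S_1$ with a weighted sum $0$, which contradicts the choice of $S_1$. Either way we reach a contradiction, so $0 \notin \Sigma_A(S)$.

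One mild subtlety to handle cleanly: the definition of $\Dav_A$ uses $|S| \ge \ell$, and the extremal "zero-free" witnesses have length exactly one less than the constant; I would make sure to phrase the passage from "no witness of length $\Dav_A(G/H)+\Dav_A(H)-2$" to "$\Dav_A(G) \ge \Dav_A(G/H)+\Dav_A(H)-1$" correctly, and also note the degenerate cases where $H$ or $G/H$ is trivial (then the corresponding constant is $1$ and the inequality is trivial or reduces to the other factor). The only place requiring genuine care is the reduction "$T_2$ empty", which hinges on the fact that an $A$-weighted sum of the empty subsequence is not counted, so that a putative zero sum using only terms of $T_2$ would indeed be a non-empty $A$-weighted subsum of $\overline{S_2}$; this is exactly where the index-set formalism of $\Sigma_A$ from Section \ref{sec_prel} is invoked. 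I do not anticipate any serious obstacle; the proof is a routine, if slightly bookkeeping-heavy, concatenation argument, and the hardest part is merely stating the edge cases and the off-by-one bookkeeping precisely.
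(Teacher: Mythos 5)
Your argument is correct and complete: the concatenation of an extremal sequence over $H$ with a lift of an extremal sequence over $G/H$, followed by projecting a putative weighted zero-subsum to $G/H$ to force it into $H$, is exactly the standard proof of this inequality. The paper itself gives no proof but cites the result from \cite[Lemma 3.1]{oscar-weighted-projectII}, where the argument is this same one; your handling of the off-by-one bookkeeping and of the case distinction on whether $T_2$ is empty is sound.
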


In combination with the upper bound from \eqref{eq_AGS} one obtains the following results.

\begin{lemma}
\label{prel_lem_lb2}
Let $H_1,H_2$ be finite abelian groups such that
\[\Dav_{\pm}(H_i)= \lfloor \log_2 |H_i| \rfloor+1\]
and such that $\lbrace \log_2 |H_1| \rbrace +\lbrace \log_2 |H_2| \rbrace <1$.
Then for every finite abelian group $G$ that is an extension of $H_1$ by $H_2$ we have
\[\Dav_{\pm}(G) = \lfloor \log_2 |G| \rfloor + 1.\]
In particular, if $G$ has a subgroup $H$ such that $\Dav_{\pm}(H)= \lfloor \log_2 |H| \rfloor+1$ and $G/H$ is a $2$-group, then $\Dav_{\pm}(G)= \lfloor \log_2 |G| \rfloor+1$.
\end{lemma}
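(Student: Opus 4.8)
The statement has two parts: a main claim about extensions and a corollary about subgroups that are $2$-groups modulo $H$. The plan is to derive the general extension statement from the two inequalities already available in the excerpt—the general lower bound of Lemma \ref{prel_lem_lb} and the universal upper bound $\Dav_{\pm}(G) \le \lfloor \log_2 |G| \rfloor + 1$ from \eqref{eq_AGS}—and then obtain the "in particular" clause by checking that the hypotheses are met when $G/H$ is a $2$-group.

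First I would set $G$ to be an extension of $H_1$ by $H_2$, meaning there is a subgroup of $G$ isomorphic to $H_2$ (say) with quotient isomorphic to $H_1$; I should be slightly careful which of $H_1, H_2$ plays the role of the sub and which the quotient, but since Lemma \ref{prel_lem_lb} is symmetric in the two factors and the hypothesis on fractional parts is symmetric too, the direction does not matter. Applying Lemma \ref{prel_lem_lb} with this $H$, together with the hypothesis $\Dav_{\pm}(H_i) = \lfloor \log_2 |H_i| \rfloor + 1$ for $i=1,2$, gives
\[
\Dav_{\pm}(G) \ge \bigl(\lfloor \log_2 |H_1| \rfloor + 1\bigr) + \bigl(\lfloor \log_2 |H_2| \rfloor + 1\bigr) - 1 = \lfloor \log_2 |H_1| \rfloor + \lfloor \log_2 |H_2| \rfloor + 1.
\]
Now the key elementary computation: writing $\log_2 |H_i| = \lfloor \log_2 |H_i| \rfloor + \{\log_2 |H_i|\}$ and using $|G| = |H_1||H_2|$, we have $\log_2 |G| = \lfloor \log_2 |H_1| \rfloor + \lfloor \log_2 |H_2| \rfloor + \{\log_2 |H_1|\} + \{\log_2 |H_2|\}$. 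The hypothesis $\{\log_2 |H_1|\} + \{\log_2 |H_2|\} < 1$ forces $\lfloor \log_2 |G| \rfloor = \lfloor \log_2 |H_1| \rfloor + \lfloor \log_2 |H_2| \rfloor$ (the two floors already account for the whole integer part). Hence the lower bound above reads $\Dav_{\pm}(G) \ge \lfloor \log_2 |G| \rfloor + 1$, which combined with the upper bound from \eqref{eq_AGS} yields equality.

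For the "in particular" clause, suppose $G$ has a subgroup $H$ with $\Dav_{\pm}(H) = \lfloor \log_2 |H| \rfloor + 1$ and $G/H$ a $2$-group. Then $G/H$ also satisfies $\Dav_{\pm}(G/H) = \lfloor \log_2 |G/H| \rfloor + 1$ because for a $2$-group the lower and upper bounds in \eqref{eq_AGS} coincide (as remarked in the introduction); moreover $\log_2 |G/H|$ is an integer, so $\{\log_2 |G/H|\} = 0$ and thus $\{\log_2 |H|\} + \{\log_2 |G/H|\} = \{\log_2 |H|\} < 1$ automatically. So $G$ is an extension of $G/H$ by $H$ (or the other way around) satisfying all hypotheses of the first part, and the conclusion follows. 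I anticipate the only delicate point is the bookkeeping with floors and fractional parts—specifically verifying that $\{x\} + \{y\} < 1$ implies $\lfloor x + y \rfloor = \lfloor x \rfloor + \lfloor y \rfloor$—which is genuinely routine, so this lemma should be short.
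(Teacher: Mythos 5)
Your proposal is correct and follows essentially the same route as the paper: Lemma \ref{prel_lem_lb} for the lower bound, the fractional-parts observation $\{\log_2|H_1|\}+\{\log_2|H_2|\}<1 \Rightarrow \lfloor\log_2|G|\rfloor=\lfloor\log_2|H_1|\rfloor+\lfloor\log_2|H_2|\rfloor$, and the universal upper bound from \eqref{eq_AGS}; the treatment of the $2$-group case is also identical. No gaps.
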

\begin{proof}
Let $G$ be a finite abelian group that is an extension of $H_1$ by $H_2$,
that is $G$ has a subgroup $H$ isomorphic to $H_2$ such that $H_1$ is isomorphic to $G/H$.
So, by Lemma \ref{prel_lem_lb} and assumption, we get that
\[\Dav_{\pm}(G) \ge \Dav_{\pm}(H_1) + \Dav_{\pm}(H_2) -1 = \lfloor \log_2 |H_1| \rfloor + \lfloor \log_2 |H_2| \rfloor +1.\]
Now, by assumption on the fractional parts
\[\lfloor \log_2 |H_1| \rfloor + \lfloor \log_2 |H_2| \rfloor = \lfloor \log_2 (|H_1| \ |H_2|) \rfloor , \]
and since of course $|H_1| \ |H_2| = |G|$ this lower bound matches the general upper  $\lfloor \log_2 |G| \rfloor + 1$, and implies the claimed equality.

The additional statement follows directly as $\{\log_2 |G/H|\}=0$ and $\Dav_{\pm}(G/H)= \lfloor \log_2 |G/H| \rfloor+1$ by \eqref{eq_AGS}.
\end{proof}

\section{The bounds of Adhikari--Grynkiewicz--Sun, revisited}
\label{sec_revisited}

In the present section we discuss \eqref{eq_AGS} and present a slight generalization of it. The main point here is not that this generalization can be obtained, this is hardly an insight, yet that it is relevant to consider this generalization.

\begin{theorem}
\label{thm_standard}
Let $G$ be a finite abelian group and let $m_1,\dots, m_t$ be positive integers such that $G$ is isomorphic to $\oplus_{i=1}^t C_{m_i}$. Then
\[\sum_{i=1}^t \lfloor \log_2 m_i  \rfloor +1 \le \Dav_{\pm} (G) \le \lfloor \sum_{i=1}^t \log_2 m_i \rfloor + 1. \]
\end{theorem}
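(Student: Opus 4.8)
The plan is to treat the two inequalities separately; the upper one is immediate from \eqref{eq_AGS}, and all the (modest) work is in the lower one.

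For the upper bound I would simply note that $\prod_{i=1}^t m_i = |G|$, so $\sum_{i=1}^t \log_2 m_i = \log_2 |G|$ and hence $\lfloor \sum_{i=1}^t \log_2 m_i \rfloor + 1 = \lfloor \log_2 |G| \rfloor + 1$, which is precisely the upper bound in \eqref{eq_AGS}. Nothing more is needed; writing the quantity in this form merely serves to line it up with the new lower bound. In particular there is no hope of improving the upper bound by this reformulation — it is literally unchanged.

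For the lower bound I would argue by a straightforward induction on $t$, stripping off one cyclic direct summand at a time by means of Lemma \ref{prel_lem_lb}. The base case $t = 1$ is the cyclic case, where \eqref{eq_AGS} with $r = 1$ gives matching bounds and hence $\Dav_{\pm}(C_{m_1}) = \lfloor \log_2 m_1 \rfloor + 1$; this also holds trivially for $m_1 = 1$, so degenerate factors may be discarded and one may assume all $m_i \ge 2$. For the inductive step, write $G \cong C_{m_1} \oplus G'$ with $G' \cong \bigoplus_{i=2}^t C_{m_i}$, regard $G'$ as a subgroup of $G$ with $G / G' \cong C_{m_1}$, and apply Lemma \ref{prel_lem_lb} with $A = \{+1,-1\}$ together with the inductive hypothesis:
\[
\Dav_{\pm}(G) \;\ge\; \Dav_{\pm}(C_{m_1}) + \Dav_{\pm}(G') - 1 \;\ge\; \bigl(\lfloor \log_2 m_1 \rfloor + 1\bigr) + \Bigl(\sum_{i=2}^t \lfloor \log_2 m_i \rfloor + 1\Bigr) - 1,
\]
and the right-hand side is exactly $\sum_{i=1}^t \lfloor \log_2 m_i \rfloor + 1$. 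Equivalently, one may iterate Lemma \ref{prel_lem_lb} in one go to obtain $\Dav_{\pm}(G) \ge \sum_{i=1}^t \Dav_{\pm}(C_{m_i}) - (t-1)$ and then substitute the cyclic values: the $t$ summands contribute $t$ in $+1$'s, while the iteration costs $t-1$, leaving a single $+1$.

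The honest assessment is that there is no real obstacle here: as the authors themselves stress, the value of the statement lies not in the (essentially one-line) proof but in recognizing that the lower bound should be taken as the maximum of $\sum_{i=1}^t \lfloor \log_2 m_i \rfloor + 1$ over \emph{all} decompositions $G \cong \bigoplus_{i=1}^t C_{m_i}$, not merely the invariant-factor one used in \eqref{eq_AGS}. This maximum can be strictly larger than the bound coming from the invariant factors, so the reformulation is a genuine improvement of the lower bound and is, in this sense, the "true analog" of $\Dav^{\ast}(G)$. The only points requiring the slightest care in the write-up are the harmless treatment of factors $m_i = 1$ and the fact that, in applying Lemma \ref{prel_lem_lb} to a direct sum $H_1 \oplus H_2$, either summand may be taken as the subgroup $H$, the roles of subgroup and quotient being interchangeable for our purposes.
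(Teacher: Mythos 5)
Your proposal is correct and matches the paper's own proof: the lower bound is obtained by iterating Lemma \ref{prel_lem_lb} to reduce to cyclic summands and using $\Dav_{\pm}(C_m) \ge \lfloor \log_2 m \rfloor + 1$, and the upper bound is literally the one from \eqref{eq_AGS} since $\prod_i m_i = |G|$. No differences worth noting.
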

Of course the upper bound is always $\lfloor \log_2 |G| \rfloor + 1$.
The difference to the above mentioned result is merely that we do not insist on the $m_i$ forming a chain of divisors.

\begin{proof}
To get the lower bound it suffices to apply Lemma \ref{prel_lem_lb} to reduce the problem to one for cyclic groups. Then one uses the well-known fact that $\Dav_{\pm}(C_m) \ge \lfloor \log_2 m  \rfloor +1$; as we need to refer to it later, we recall that $e(2e)\dots (2^{\lfloor \log_2 m  \rfloor - 1})e$, where $e$ is a generating element of $C_m$, can serve as example for a sequence of length $\lfloor \log_2 m  \rfloor$ without plus-minus weighted zero-subsum. The upper bound is in fact identical to the one in \eqref{eq_AGS}.
\end{proof}

Now, at first, it might not be clear why we insist on making this slight generalization.
For the Davenport constant (without weights) it is known that if one wishes to establish a lower bound via writing the group as a direct sum of cyclic groups and then plugging in the exact value for cyclic group, as we do here, then among all decompositions choosing the one where the orders form a chain of divisors is optimal. And, for the cross number it is known that to choose the orders to be prime powers is always optimal; the cross number $\mathsf{K}(G)$ of $G$ is the smallest value such that each sequence of that cross number has a zero-sum subsequence (the cross number of a sequence is the sum of the reciprocals of the orders of elements in the sequence); see, e.g., \cite[Section 9]{gaogersurvey} for details. Yet, for the plus-minus weighted Davenport constant this is not the case, and depending on precise circumstances different types of decompositions can be optimal. Before discussing this is more detail, we illustrate it by two explicit examples.

\begin{example} \label{ex_lb} \
\begin{enumerate}
\item We have $C_3 \oplus C_{3 \cdot 11 \cdot 23} \cong C_{3 \cdot 11} \oplus C_{3 \cdot 23}$. The former decomposition, the standard one, yields $(1+ 9) + 1=11$ as lower bound, whereas the latter yields $(5+6)+1=12$.
\item We have $C_{3 \cdot 13 \cdot 23}^2 \cong C_{3 \cdot 13} \oplus C_{3 \cdot 23} \oplus C_{13 \cdot 23}$. The former yields $(9 +9 ) + 1 = 19$ and the latter $(5 + 6 + 8)+1 = 20$.
\end{enumerate}
\end{example}

The underlying phenomenon is that one needs to choose a direct-sum decomposition such that the losses when taking the integral parts are minimal.
We make the following definition.

\begin{definition}
Let $G$ be a finite abelian group. Then
\[\Dav_{\pm}^{\ast}(G) = \max  \{\sum_{i=1}^t \lfloor \log_2 m_i  \rfloor +1 \colon G \cong \oplus_{i=1}^t C_{m_i}, \text{ with }  t, \, m_i \in \N  \}.\]
\end{definition}
For notational convenience this definition allows cyclic components of order one, which does not affect the definition in any substantial way.

Possibly our notation needs some explanation, in particular as it is slightly more than only some notation.
We choose the notation $\Dav_{\pm}^{\ast}(G)$ in analogy with the notations $\Dav^{\ast}(G)$ and $\mathsf{K}^{\ast}(G)$ for the Davenport constant and the cross number, respectively.
As recalled in the introduction $\Dav^{\ast}(G) = 1  + \sum_{i=1}^r (n_i -1)$ where $n_i$ are natural numbers with $n_i \mid n_{i+1}$ such that $G \cong \oplus_{i=1}^r C_{n_i}$ and $\mathsf{K}^{\ast}(G) = 1/\exp(G) + \sum_{i=1}^t(q_i-1)/q_i$ where $q_i$ are prime-powers such that $G \cong \oplus_{i=1}^t C_{q_i}$.

It seems to us that this definition is the correct analog of these definitions, in the sense that both $\Dav^{\ast}(G)$ and $\mathsf{K}^{\ast}(G)$ also would arise when considering the maximum over all lower bounds obtained via decomposing $G$ into cyclic components (and applying direct bounds for cyclic groups).
Of course, in those cases one does not define the constants via taking a maximum, since a decomposition is known that always yields the maximum (chain of divisors and prime-powers, respectively).

While it would be convenient to also know such a decomposition in our case, we believe that it is impossible (in a uniform way) to specify it.
This is already somewhat illustrate by the example above. We further elaborate on this point.

Let $G$ be a finite finite abelian group, and let $q_1, \dots, q_s$ denote prime powers (not $1$) such that $G \cong \oplus_{i=1}^s C_{q_i}$; of course the $q_i$ are uniquely determined up to ordering and $s$ is the total rank of $G$.
Furthermore, let $f_i = \{\log_2 q_i \}$.

Let $\mathcal{P}$ denote the set of primes dividing the order of $G$, and for each $p \in \mathcal{P}$ let $I_p$ denote the subset of $[1,s]$ formed by the $i$'s such that $q_i$ is a power of $p$.

One then notices that to find $\Dav_{\pm}^{\ast}(G)$ is equivalent to finding a decomposition $[1,s]= \cup_{i=1}^t J_i$ with pairwise disjoint $J_i$ such that $|J_i \cap I_p| \le 1$ for all $i,p$ such that
\[\sum_{i=1}^t \lfloor \sum_{j\in J_i} f_j \rfloor \]
is maximal (among all such decompositions of $[1,s]$).

There seems to be no general criterion to determine which decomposition is optimal, and this problem might even be difficult from an algorithmic point of view (though we did not explore this). At first glance, one might think that to minimize $t$ is a good idea, yet as documented by Example \ref{ex_lb}.2, this does not necessarily yield the optimal value. Of course, in case there are two sets $J_j$ and $J_k$ such that their union still has the property of having with each $I_p$ at most one element in common, then one can consider the decomposition obtained via replacing these two sets by their union without decreasing the sum in question. In other words, there is always a direct-sum decomposition attaining the maximum such that no two cyclic groups in it have coprime order. Yet, it is possible that the maximum is only attained for decompositions such that the orders of the cyclic groups (in their entirety) are coprime; in other words, the $t$ for each optimal decomposition can be greater than the rank of the group; see Example \ref{ex_lb}.2 for an explicit example.

For future reference we formulate the following corollary to Theorem \ref{thm_standard}.

\begin{corollary}
\label{cor_davpm}
Let $G$ be a finite abelian group.
Then
\[\Dav^{\ast}_{\pm} (G) \le \Dav_{\pm}(G) \le \Dav^{\ast}_{\pm} (G) + \rank(G)-1.\]
\end{corollary}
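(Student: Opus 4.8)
The plan is to derive both inequalities directly from Theorem \ref{thm_standard}, with essentially no extra work. For the lower bound $\Dav^{\ast}_{\pm}(G) \le \Dav_{\pm}(G)$ nothing is needed beyond the definition of $\Dav^{\ast}_{\pm}$: for every decomposition $G \cong \oplus_{i=1}^t C_{m_i}$ the lower bound in Theorem \ref{thm_standard} gives $\sum_{i=1}^t \lfloor \log_2 m_i \rfloor + 1 \le \Dav_{\pm}(G)$, and taking the maximum of the left-hand side over all such decompositions yields the claim.

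For the upper bound I would fix the canonical decomposition $G \cong C_{n_1} \oplus \dots \oplus C_{n_r}$ with $1 < n_1 \mid \dots \mid n_r$ and $r = \rank(G)$, so that by definition $\Dav^{\ast}_{\pm}(G) \ge \sum_{i=1}^r \lfloor \log_2 n_i \rfloor + 1$, i.e.\ $\sum_{i=1}^r \lfloor \log_2 n_i \rfloor \le \Dav^{\ast}_{\pm}(G) - 1$. Writing $\log_2 |G| = \sum_{i=1}^r \log_2 n_i = \sum_{i=1}^r \lfloor \log_2 n_i \rfloor + \sum_{i=1}^r \{\log_2 n_i\}$ and using that each fractional part is strictly less than $1$, so that $\sum_{i=1}^r \{\log_2 n_i\} < r$ and hence $\lfloor \sum_{i=1}^r \{\log_2 n_i\}\rfloor \le r - 1$, one obtains
\[
\lfloor \log_2 |G| \rfloor = \sum_{i=1}^r \lfloor \log_2 n_i \rfloor + \Big\lfloor \sum_{i=1}^r \{\log_2 n_i\}\Big\rfloor \le \big(\Dav^{\ast}_{\pm}(G) - 1\big) + (r - 1).
\]
Combining this with the upper bound $\Dav_{\pm}(G) \le \lfloor \log_2 |G| \rfloor + 1$ from Theorem \ref{thm_standard} gives $\Dav_{\pm}(G) \le \Dav^{\ast}_{\pm}(G) + \rank(G) - 1$, as desired.

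There is essentially no obstacle here; the only point requiring a modicum of care is the manipulation of the floor function, namely the identity $\lfloor z + b \rfloor = z + \lfloor b \rfloor$ for $z \in \Z$ applied to the integer $\sum_{i=1}^r \lfloor \log_2 n_i \rfloor$, together with the crude but sufficient estimate $\lfloor \sum_{i=1}^r \{\log_2 n_i\}\rfloor \le r - 1$. One could alternatively run the argument with an arbitrary cyclic decomposition into $t$ parts and then observe that the resulting bound is sharpest when $t = \rank(G)$, but using the canonical divisor-chain decomposition from the outset is the cleanest route.
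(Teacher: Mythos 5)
Your proof is correct and follows essentially the same route as the paper: the lower bound by taking the maximum of the bound in Theorem \ref{thm_standard} over all cyclic decompositions, and the upper bound by comparing $\lfloor \log_2 |G| \rfloor$ with $\sum_{i}\lfloor \log_2 n_i\rfloor$ for a decomposition into $\rank(G)$ cyclic summands and invoking $\Dav_{\pm}(G) \le \lfloor \log_2 |G|\rfloor + 1$. (If anything, your write-up is slightly more careful: the floor estimate you actually need and prove is $\lfloor \sum_i x_i \rfloor \le \sum_i \lfloor x_i \rfloor + (t-1)$, whereas the paper's proof states this inequality with the two sides transposed, which is a harmless slip.)
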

\begin{proof}
The first inequality is immediate by Theorem \ref{thm_standard} and the definition of $\Dav^{\ast}_{\pm} (G)$. For the second inequality it suffices to note that by definition we can write $G$ as the direct sum of $\rank(G)$ cyclic groups and for any choice of real numbers $x_1, \dots , x_t$ one has $\sum_{i=1}^t \lfloor x_i \rfloor \le  \lfloor \sum_{i=1}^t  x_i \rfloor + (t-1)$.
\end{proof}
Indeed, as the proof shows the upper bound in the corollary could be improved by replacing $\Dav^{\ast}_{\pm} (G)$ by the minimum over $\sum_{i=1}^r \lfloor \log_2 m_i \rfloor +1$ with $G \cong \oplus_{i=1}^{r} C_{m_i}$ and $\rank(G)=r$.

Moreover, the observations of this section already allow to improve on the known lower bounds of $\Dav_{\pm}(G)$ for certain $G$, namely those where the choice of $m_1, \dots, m_t$ as a chain of divisors does not attain the maximum, and also allows to obtain in some of these cases the exact value of $\Dav_{\pm}(G)$, namely when $\Dav_{\pm}^{\ast}(G)$ matches the upper bound $\lfloor \log_2 |G| \rfloor +1$.
Various explicit results could be formulated; we limit ourselves to two examples, in the spirit of Example \ref{ex_lb}.

\begin{theorem}
Let $n,m_1,m_2 \in \mathbb{N}$ pairwise coprime integers such that $\{\log_2 m_i\} + \{\log_2 n \} \ge 1$ for each $i$ and  $\{\log_2 m_1\}+ \{\log_2 m_2\}+ 2 \{\log_2 n \} < 3$. Then
\[\Dav^{\ast}_{\pm}(C_n \oplus C_{nm_1 m_2})= \Dav_{\pm}(C_n \oplus C_{nm_1 m_2}) = \lfloor \log_2 (n^2 m_1 m_2) \rfloor + 1.\]
\end{theorem}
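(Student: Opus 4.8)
The plan is to derive the result purely from the bounds already established in the excerpt: the lower bound $\Dav_{\pm}^{\ast}(G)\le \Dav_{\pm}(G)$ of Corollary \ref{cor_davpm} together with the upper bound $\Dav_{\pm}(G)\le \lfloor \log_2 |G|\rfloor +1$ from \eqref{eq_AGS}. Since $|C_n\oplus C_{nm_1m_2}| = n^2m_1m_2$, it suffices to produce a single direct-sum decomposition of $G=C_n\oplus C_{nm_1m_2}$ showing that $\Dav_{\pm}^{\ast}(G)$ already reaches the value $\lfloor \log_2(n^2m_1m_2)\rfloor +1$; then the chain $\Dav_{\pm}^{\ast}(G)\le \Dav_{\pm}(G)\le \lfloor\log_2|G|\rfloor+1$ collapses to equalities and yields both asserted identities at once.

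First I would record the relevant isomorphism. As $n,m_1,m_2$ are pairwise coprime, the Chinese Remainder Theorem (equivalently, the identity $C_a\oplus C_b\cong C_{\gcd(a,b)}\oplus C_{\operatorname{lcm}(a,b)}$ for rank-two groups) gives
\[ C_n\oplus C_{nm_1m_2}\cong C_{nm_1}\oplus C_{nm_2}. \]
By the definition of $\Dav_{\pm}^{\ast}$ this decomposition contributes
\[ \Dav_{\pm}^{\ast}(G)\ \ge\ \lfloor \log_2(nm_1)\rfloor + \lfloor \log_2(nm_2)\rfloor + 1. \]

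Next I would evaluate the floors. Writing $\log_2(nm_i)=\log_2 n+\log_2 m_i$ and separating integral and fractional parts, $\lfloor \log_2(nm_i)\rfloor = \lfloor \log_2 n\rfloor + \lfloor \log_2 m_i\rfloor + \lfloor \{\log_2 n\}+\{\log_2 m_i\}\rfloor$; since $0\le \{\log_2 n\}+\{\log_2 m_i\}<2$ and, by hypothesis, $\{\log_2 n\}+\{\log_2 m_i\}\ge 1$, the last floor is exactly $1$. Hence the lower bound above equals $2\lfloor\log_2 n\rfloor+\lfloor\log_2 m_1\rfloor+\lfloor\log_2 m_2\rfloor+3$. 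On the other hand,
\[ \lfloor \log_2(n^2m_1m_2)\rfloor = 2\lfloor\log_2 n\rfloor+\lfloor\log_2 m_1\rfloor+\lfloor\log_2 m_2\rfloor+\bigl\lfloor 2\{\log_2 n\}+\{\log_2 m_1\}+\{\log_2 m_2\}\bigr\rfloor, \]
and the two hypotheses force the last floor to be exactly $2$: summing $\{\log_2 n\}+\{\log_2 m_1\}\ge 1$ and $\{\log_2 n\}+\{\log_2 m_2\}\ge 1$ gives $2\{\log_2 n\}+\{\log_2 m_1\}+\{\log_2 m_2\}\ge 2$, while $\{\log_2 m_1\}+\{\log_2 m_2\}+2\{\log_2 n\}<3$ gives the strict upper bound. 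Comparing the two computations shows $\Dav_{\pm}^{\ast}(G)\ge \lfloor\log_2(n^2m_1m_2)\rfloor+1$, which matches the upper bound and closes the argument.

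There is no genuine obstacle here: the proof is a bookkeeping exercise with floors once the regrouping $C_n\oplus C_{nm_1m_2}\cong C_{nm_1}\oplus C_{nm_2}$ is spotted. The only step requiring a little attention is pinning the floor of $2\{\log_2 n\}+\{\log_2 m_1\}+\{\log_2 m_2\}$ to exactly $2$, which is precisely what the two stated conditions on the fractional parts are engineered to guarantee.
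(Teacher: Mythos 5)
Your proof is correct and follows essentially the same route as the paper: regroup $C_n\oplus C_{nm_1m_2}\cong C_{nm_1}\oplus C_{nm_2}$ via coprimality, check that the hypotheses on fractional parts make $\lfloor\log_2(nm_1)\rfloor+\lfloor\log_2(nm_2)\rfloor=\lfloor\log_2(n^2m_1m_2)\rfloor$, and sandwich between the lower and upper bounds of Theorem \ref{thm_standard}. Your write-up simply spells out the floor bookkeeping that the paper leaves implicit.
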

\begin{proof}
Note that $C_n \oplus C_{nm_1 m_2} \cong C_{nm_1} \oplus C_{nm_2}$, and $\lfloor \log_2 (n m_1) \rfloor  + \lfloor \log_2 (n m_2) \rfloor = \lfloor \log_2 (n^2 m_1 m_2) \rfloor $, by the conditions on the fractional parts.
The claim follows by Theorem \ref{thm_standard}.
\end{proof}

\begin{theorem}
Let $n_1,n_2,n_3 \in \mathbb{N}$ pairwise coprime integers such that $\{\log_2 n_i\} + \{\log_2 n_j \} \ge 1$ for each $i\neq j$ and  $\{\log_2 n_1\}+ \{\log_2 n_2\}+ 2 \{\log_2 n_3 \} < 2$. Then
\[\Dav^{\ast}_{\pm}(C_{n_1n_2n_3}^2 )= \Dav_{\pm}(C_n \oplus C_{nm_1 m_2}) = \lfloor 2 \log_2 (n_1 n_2 n_3) \rfloor + 1.\]
\end{theorem}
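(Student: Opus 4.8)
The plan is to imitate the proof of the preceding theorem: produce a direct-sum decomposition of $C_{n_1n_2n_3}^2$ into cyclic groups that does \emph{not} form a chain of divisors, plug it into Theorem \ref{thm_standard}, and then check that the resulting floors collapse to the asserted value. First I would record the isomorphism. Since $n_1,n_2,n_3$ are pairwise coprime, $C_{n_1n_2n_3}\cong C_{n_1}\oplus C_{n_2}\oplus C_{n_3}$, hence $C_{n_1n_2n_3}^2\cong C_{n_1}^2\oplus C_{n_2}^2\oplus C_{n_3}^2$, and regrouping the six cyclic factors into the three coprime pairs $\{n_1,n_2\},\{n_1,n_3\},\{n_2,n_3\}$ gives
\[C_{n_1n_2n_3}^2\cong C_{n_1n_2}\oplus C_{n_1n_3}\oplus C_{n_2n_3}.\]
Applying Theorem \ref{thm_standard} to this decomposition into $t=3$ cyclic groups yields
\[\Dav_{\pm}(C_{n_1n_2n_3}^2)\ge \lfloor \log_2(n_1n_2)\rfloor+\lfloor \log_2(n_1n_3)\rfloor+\lfloor \log_2(n_2n_3)\rfloor+1.\]

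The core step is the floor bookkeeping. Writing $f_i=\{\log_2 n_i\}$, one has $f_i+f_j<2$ trivially and $f_i+f_j\ge 1$ by hypothesis, so $\lfloor \log_2(n_in_j)\rfloor=\lfloor \log_2 n_i\rfloor+\lfloor \log_2 n_j\rfloor+1$ for each of the three pairs; summing gives $2\sum_i\lfloor \log_2 n_i\rfloor+3$. On the other hand $\lfloor 2\log_2(n_1n_2n_3)\rfloor=2\sum_i\lfloor \log_2 n_i\rfloor+\lfloor 2(f_1+f_2+f_3)\rfloor$, and since $2(f_1+f_2+f_3)=(f_1+f_2)+(f_1+f_3)+(f_2+f_3)\ge 3$ while $f_1+f_2+f_3\le f_1+f_2+2f_3<2$ (using $f_3\ge 0$), the last floor equals $3$. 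Hence the lower bound above equals $\lfloor 2\log_2(n_1n_2n_3)\rfloor+1$.

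To close the argument I would note that $|C_{n_1n_2n_3}^2|=(n_1n_2n_3)^2$, so the upper bound of Theorem \ref{thm_standard} (equivalently \eqref{eq_AGS}) is exactly $\lfloor 2\log_2(n_1n_2n_3)\rfloor+1$ as well, pinning down $\Dav_{\pm}$. Finally, the decomposition $C_{n_1n_2}\oplus C_{n_1n_3}\oplus C_{n_2n_3}$ realizes this value in the defining maximum for $\Dav^{\ast}_{\pm}$, and $\Dav^{\ast}_{\pm}(G)\le\Dav_{\pm}(G)$ by Corollary \ref{cor_davpm}, so $\Dav^{\ast}_{\pm}(C_{n_1n_2n_3}^2)=\lfloor 2\log_2(n_1n_2n_3)\rfloor+1$ as well.

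There is no genuine obstacle: the content is the regrouping trick already used in Example \ref{ex_lb}.2 together with elementary estimates on fractional parts. The only point requiring a word of care is the asymmetric role of $n_3$ in the hypotheses; it enters solely to force $f_1+f_2+f_3<2$ (hence $\lfloor 2(f_1+f_2+f_3)\rfloor=3$), and any of the three indices could equally well have been singled out, so the reader should be told this constraint is essentially symmetric.
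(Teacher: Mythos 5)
Your proof is correct and takes essentially the same route as the paper's: the regrouping $C_{n_1n_2n_3}^2\cong C_{n_1n_2}\oplus C_{n_1n_3}\oplus C_{n_2n_3}$ followed by Theorem \ref{thm_standard}, with the upper and lower bounds coinciding. The paper compresses the floor computation into the phrase ``by the conditions on the fractional parts''; you have merely written out that bookkeeping explicitly, and your remark that the asymmetric hypothesis is used only to force $\{\log_2 n_1\}+\{\log_2 n_2\}+\{\log_2 n_3\}<2$ is accurate.
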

\begin{proof}
We note that $C_{n_1n_2n_3}^2 \cong C_{n_1n_2} \oplus C_{n_1 n_3}  \oplus C_{n_2 n_3}$, and that $\lfloor \log_2 (n_1 n_2) \rfloor  + \lfloor \log_2 (n_1 n_3)  \rfloor   + \lfloor \log_2 (n_2 n_3)  \rfloor = \lfloor 2 \log_2 (n_1 n_2 m_3) \rfloor $, by the conditions on the fractional parts. Now, the claim follows by Theorem \ref{thm_standard}.
\end{proof}

Yet, of course $\Dav_{\pm}^{\ast}(G)$ does not always match the upper bound
$\lfloor \log_2 |G| \rfloor +1$.
Thus, a question that imposes itself is whether or not  there exist groups $G$ such that $\Dav_{\pm}(G)$ is indeed strictly larger than $\Dav_{\pm}^{\ast}(G)$. In view of the fact that this is so for the analogue question for the Davenport constant (see, e.g., \cite{glp}) but unknown, and in fact conjectured not to be so for the closely related cross number (see \cite[Conjecture 9.4]{gaogersurvey}), the answer a priori does not seem clear. In the following section, we see that such groups indeed exist.

\section{More lower bounds}
\label{sec_MLB}

The goal of this section is to exhibit some examples of groups where the actual value of $\Dav_{\pm}(G)$ does not coincide with $\Dav_{\pm}^{\ast}(G)$.
This will also allow to obtain the exact value of $\Dav_{\pm}(G)$ in some additional cases.

In the following result we present another construction for sequences without plus-minus weighted zero-subsum. We stress that in the result below we do not impose any divisibility or relative size condition on $m_1$ and $m_2$.

\begin{proposition}
\label{prop_constr}
Let $m_1,m_2$ be integers with $m_1\ge 4$ and $m_2 \ge 3$.
Then
\[\Dav_{\pm} (C_{m_1} \oplus C_{m_2}) \ge \lfloor \log_2 (m_1/3) \rfloor  +  \lfloor \log_2 (m_2/3)\rfloor + 4.\]
\end{proposition}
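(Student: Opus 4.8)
The plan is to exhibit an explicit sequence $S$ over $C_{m_1} \oplus C_{m_2}$ of length exactly $\lfloor \log_2(m_1/3)\rfloor + \lfloor \log_2(m_2/3)\rfloor + 3$ that has no plus-minus weighted zero-subsum; since $\Dav_{\pm}(G)$ is by definition one more than the longest such sequence, this yields the claimed bound. Write $k_i = \lfloor \log_2(m_i/3)\rfloor$, fix a basis $(e_1,e_2)$ with $\operatorname{ord}(e_i)=m_i$, and combine two ingredients: the classical cyclic example $e_i, 2e_i, 4e_i, \dots, 2^{k_i-1}e_i$ (recalled in the proof of Theorem \ref{thm_standard}) applied in each coordinate separately, plus three additional carefully chosen terms that exploit the ``gap'' of size roughly $3$ in $m_i/2^{k_i}$. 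The natural candidates for the three extra terms are something like $2^{k_1}e_1$, $2^{k_2}e_2$, and a mixed term $2^{k_1}e_1 + 2^{k_2}e_2$, or a small variant thereof; the point of the hypotheses $m_1\ge 4$, $m_2\ge 3$ and the $/3$ in the logarithms is precisely to guarantee that $3\cdot 2^{k_i} \le m_i < 4\cdot 2^{k_i}$ is not forced but that enough room exists for these terms to behave like independent ``fresh'' generators that cannot be cancelled.

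The key steps, in order, are: (i) set up the basis and the sequence $S = \bigl(\prod_{j=0}^{k_1-1} 2^j e_1\bigr)\bigl(\prod_{j=0}^{k_2-1} 2^j e_2\bigr) \cdot T$ where $T$ is the three-term tail; (ii) take an arbitrary nonempty subsequence of $S$ with a choice of signs $\pm 1$ summing to $0$, and project onto each coordinate $\langle e_i\rangle \cong C_{m_i}$; (iii) in coordinate $i$, the contribution of the chosen powers $2^j e_i$ together with whatever the tail $T$ contributes must be $\equiv 0 \pmod{m_i}$; use the binary-expansion rigidity of the set $\{2^0, 2^1, \dots, 2^{k_i-1}\}$ together with the size of $2^{k_i}$ relative to $m_i$ to force the signed sum of the pure-$e_i$ powers to lie in a small interval, and then to conclude that actually no $2^j e_i$ term is selected and no tail term is selected; (iv) once both coordinates are forced to contribute nothing, the chosen subsequence is empty, a contradiction. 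The standard tool in step (iii) is that a $\pm 1$ combination of $1, 2, \dots, 2^{k-1}$ has absolute value at most $2^k - 1 < 2^k$, so it vanishes mod $m$ only if it vanishes as an integer, which (since these are signed distinct powers of two, hence a signed binary representation) forces all coefficients to be zero.

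The main obstacle will be step (iii) in the presence of the mixed tail term $2^{k_1}e_1 + 2^{k_2}e_2$: selecting it forces a contribution of $\pm 2^{k_i}$ into \emph{both} coordinates simultaneously, and one must check that this cannot be absorbed by the pure powers in either coordinate without creating an inconsistency. This is where the interplay between $m_1$ and $m_2$ matters and where the condition $m_i \ge 3\cdot 2^{k_i}$ (equivalently the $/3$) is used: one shows that a term of size $2^{k_i}$ plus a $\pm 1$ combination of $1,\dots,2^{k_i-1}$ has absolute value in the range $(0, 2^{k_i+1})$, which is still less than $m_i$ only when $m_i$ is large enough, and the arithmetic has to be done carefully for the boundary cases $m_1 = 4$ (so $k_1 = 0$) and $m_2 = 3$ (so $k_2 = 0$), where the pure-power blocks are empty and the tail $T$ alone must be shown dissociated in the relevant quotient. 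I would handle these small cases by a direct check. A secondary, purely bookkeeping obstacle is confirming that the three tail terms are genuinely ``new'' — i.e. that the length count $k_1 + k_2 + 3$ is achieved and that $T$ is not accidentally already dissociated-dependent with the blocks when $m_i$ is close to $3 \cdot 2^{k_i}$; this is again a finite verification using $\lfloor \log_2(m_i/3)\rfloor = k_i \iff 3\cdot 2^{k_i} \le m_i < 3 \cdot 2^{k_i + 1}$.
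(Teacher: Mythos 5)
Your overall strategy (exhibit a dissociated sequence of length $\lfloor \log_2 (m_1/3) \rfloor + \lfloor \log_2 (m_2/3)\rfloor + 3$, built from power-of-two blocks in each coordinate plus a three-term tail) is the right shape and matches the paper's, but the construction you actually propose does not work, and the part you leave open is precisely where all the content of the proposition lies. Your ``natural candidate'' tail $2^{k_1}e_1$, $2^{k_2}e_2$, $2^{k_1}e_1+2^{k_2}e_2$ is killed immediately: the plus-minus combination $+(2^{k_1}e_1+2^{k_2}e_2) - (2^{k_1}e_1) - (2^{k_2}e_2) = 0$ is a weighted zero-subsum supported entirely on the tail, so the sequence is not dissociated and step (iii) (``no tail term is selected'') cannot be carried out. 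The phrase ``or a small variant thereof'' does not repair this, because no variant of this additive-in-each-coordinate form can: the difficulty is structural, not a matter of nudging the exponents.

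The paper's construction is genuinely different in two respects that your sketch misses. First, the second block is $3\cdot 2^j e_2$ for $j\in[0,\ell-1]$ (multiples of $3e_2$), while all three tail elements share the \emph{same} $e_2$-coordinate, namely $e_2$ itself; this forces any zero-subsum meeting the tail to use either exactly two tail elements with opposite signs or all three with equal signs, reducing everything to the $e_1$-coordinate. Second, the $e_1$-coordinates of the tail are not $0$ or $2^{k}$ but $d$, $d+2^{k}$, $d+2^{k+1}$ with $d = m - 2^{k}$ and $m$ the nearest integer to $m_1/6$; the point is that a difference of two of them is $\pm 2^{k}e_1$ or $\pm 2^{k+1}e_1$, and the sum of all three is about $(m_1/2)e_1$, and one checks using $3\cdot 2^{k}\le m_1$ that none of these can be cancelled by the set $\{-(2^{k}-1)e_1,\dots,(2^{k}-1)e_1\}$ of weighted subsums of the first block. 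Without this (or an equivalent) choice of tail, the claimed bound is not established; so as written the proposal has a genuine gap at its central step.
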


\begin{proof}
Let $e_1, e_2$ be generating elements of $C_{m_1} \oplus C_{m_2}$ such that the orders of $e_1$ and $e_2$ are $m_1$ and $m_2$, respectively; of course, $e_1$ and $e_2$ are then independent. We construct a sequence of length $\lfloor \log_2 (m_1/3) \rfloor  +  \lfloor \log_2 (m_2/3)\rfloor + 3$ without plus-minus weighted zero-subsum.

Let $k= \lfloor \log_2 (m_1/3) \rfloor$ and let $\ell = \lfloor \log_2 (m_2/3)\rfloor $.
Let $d = m  - 2^k$ where $m$ is the integer closest to $m_1/6$ (in case this is a half-integer we round up).

We consider the sequences $T_1$, $T_2$, and $T_3$ formed by the elements
\begin{itemize}
\item $2^i e_1$ for $i\in [0,k-1]$,
\item $2^j3 e_2$ for $j\in [0,\ell-1]$, and
\item $de_1+e_2$, $(d+2^k)e_1+e_2$, and $(d+2^{k+1})e_1+e_2$.
\end{itemize}

The length of the sequence $T_1T_2T_3$ is $ k +\ell +3$ and we need to show that it has no plus-minus weighted zero-subsum.

The sequence $T_1$, compare the proof of Theorem \ref{thm_standard}, has no plus-minus weighted zero-subsum and for similar reasons $T_2$ has no such subsum either; more precisely, the sets of plus-minus weighted subsums are
\[A_1=\{-(2^k-1)e_1 , \dots, -e_1,e_1, \dots, (2^k -1)e_1\}\]
 and
\[A_2=\{-3(2^{\ell}-1)e_2 , \dots, -3e_2,3e_2,6e_2 \dots, 3(2^{\ell} -1)e_2\},\]
respectively.
Thus, a zero-subsum of $T_1T_2T_3$ has to contain elements from $T_3$ and more precisely either two with opposite weights or all three with the same weight.

First assume the former. In this case the $e_1$-coordinate of the sum of these two elements, call it $a$,  is $\pm 2^ke_1$ or $\pm 2^{k+1}e_1$. Since by the choice of $k$ we have $3\ 2^k\le m_1$, in neither case $-a \in A_1$. Since all elements in $T_2$ have $e_1$-coordinate equal to $0$, it is thus impossible to have a plus-minus weighted zero-subsum of this form.

Second assume the later, and without restriction assume the three elements are weighted with plus. In this case the $e_1$-coordinate, call it $b$, of the sum of these three elements is one of
\[
\frac{m_1+\epsilon}{2}e_1 \text{ with } \epsilon \in \{-2,\dots,3\};\]
which one precisely depends on $m_1$. For none of these we have $-b \in A_1$; for $m_1\ge 9$ this follows directly noting
\[2^k \le \frac{m_1}{3} \le \frac{m_1-\epsilon}{2} \le \frac{2m_1}{3} \le m_1 -2^k,\]
and for the remaining values by the same argument just considering the actual values of $b$, and thus $\epsilon$, and $k$. So, again no plus-minus weighted zero-subsum is possible.
\end{proof}

We proceed to discuss the quality of this construction relative to the one discussed earlier (see Theorem \ref{thm_standard}), which yields a lower bound of
\[\lfloor \log_2 m_1 \rfloor  + \lfloor \log_2 m_2 \rfloor +1 \] for the plus-minus weighted Davenport constant of $C_{m_1} \oplus C_{m_2}$, assuming for simplicity that there is no other or at least no `better' decomposition as sum of cyclic groups, which is the case for example if both $m_1$ and $m_2$ are powers of the same prime number.

We have that
\[\lfloor \log_2 (m_1/3) \rfloor  +  \lfloor \log_2 (m_2/3)\rfloor + 4\]
equals
\[\lfloor \log_2 (m_1) \rfloor  +  \lfloor \log_2 (m_2)\rfloor + 1  + \delta\]
where
\begin{itemize}
\item $\delta = 1$ if $\{\log_2 m_i \} \ge \{ \log_2 3\}$ for both $i=1$ and $i=2$.
\item $\delta = 0$ if $\{\log_2 m_i \} \ge \{ \log_2 3\}$ for exactly one of  $i=1$ and $i=2$.
\item $\delta =-1$ if $\{\log_2 m_i \} \ge \{ \log_2 3\}$ for none of  $i=1$ and $i=2$.
\end{itemize}

Thus, we see that the construction given in Proposition \ref{prop_constr} under certain circumstances can yield an improvement over the known construction, yet under other circumstances it can yield a worse result.

We now present some consequences of this. We start with the following technical result and then apply it in some more restricted situations, where the improvement becomes more apparent.

\begin{proposition}
\label{prop_lb}
Let $m_1, \dots, m_r$ be positive integers with  $\{\log_2 m_i \} \ge \{\log_2 3 \}$ and $m_i \ge 4$ for at least $\lfloor r/2 \rfloor $ of them.
If $\sum_{i=1}^r \{\log_2 m_i \} < \lfloor r/2 \rfloor + 1$,
then
\[ \Dav_{\pm}( \oplus_{i=1}^r C_{m_i}) =  \lfloor \sum_{i=1}^r \log_2 m_i \rfloor +1.\]
\end{proposition}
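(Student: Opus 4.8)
The plan is to match the general upper bound $\lfloor\log_2|G|\rfloor+1=\lfloor\sum_i\log_2 m_i\rfloor+1$ supplied by Theorem~\ref{thm_standard}; all the work is in the lower bound.

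The main step is to reduce to the rank-two case by a careful pairing. After reindexing, let $m_1,\dots,m_{\lfloor r/2\rfloor}$ be components with $\{\log_2 m_i\}\ge\{\log_2 3\}$ and $m_i\ge 4$, and pair each such $m_j$ with one of the remaining components $m'_j$ (for $j=1,\dots,\lfloor r/2\rfloor$), leaving one component unpaired if $r$ is odd. For such a pair Proposition~\ref{prop_constr} applies, and since $m_j$ is ``good'' one has the clean identity $\lfloor\log_2(m_j/3)\rfloor=\lfloor\log_2 m_j\rfloor-1$, so $\Dav_{\pm}(C_{m_j}\oplus C_{m'_j})\ge\lfloor\log_2 m_j\rfloor+\lfloor\log_2(m'_j/3)\rfloor+3$. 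Before fixing the pairing I would reorder the remaining components (and, when there are more than $\lfloor r/2\rfloor$ good ones, pair good with good) so as to make $\{\log_2 m_j\}+\{\log_2 m'_j\}\ge 1$ for as many pairs as possible; the point is that every such pair buys an extra unit in the estimate while consuming at least one unit of the ``fractional budget'' $F:=\sum_i\{\log_2 m_i\}$, so the hypothesis $F<\lfloor r/2\rfloor+1$ should be exactly what is needed to arrange $\lfloor F\rfloor$ pairs with $\{\log_2 m_j\}+\{\log_2 m'_j\}\ge 1$. Assembling the pairs (and the leftover cyclic factor, for which $\Dav_{\pm}(C_{m_r})=\lfloor\log_2 m_r\rfloor+1$) by Lemma~\ref{prel_lem_lb}, the losses coming from the ``$-1$'' in that lemma should be compensated exactly by the contributions of the ``carrying'' pairs, and a bookkeeping with floor functions should then yield $\Dav_{\pm}(G)\ge\sum_i\lfloor\log_2 m_i\rfloor+\lfloor F\rfloor+1=\lfloor\log_2|G|\rfloor+1$, giving equality.

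I expect the delicate point to be twofold. First, the combinatorial lemma that the prescribed number of ``carrying'' pairs can always be formed under the two hypotheses is a small rearrangement argument that needs care (in particular, when to pair good with good versus good with not-good, depending on the sizes of the fractional parts). Second, and more seriously, for a pair consisting of a ``good'' $m_j$ and a \emph{non}-``good'' $m'_j$ whose log-fractional parts nevertheless sum to at least $1$, Proposition~\ref{prop_constr} alone only gives $\lfloor\log_2 m_j\rfloor+\lfloor\log_2 m'_j\rfloor+1$, one short of what is needed. To cover this boundary case one has to argue separately: when $\gcd(m_j,m'_j)=1$ the group $C_{m_j}\oplus C_{m'_j}$ is cyclic and its exact value is already $\lfloor\log_2(m_j m'_j)\rfloor+1$, while when $\gcd(m_j,m'_j)>1$ one needs either a different direct-sum decomposition of this rank-two group (reducing it to a more favorable pairing) or a sharpening of the construction underlying Proposition~\ref{prop_constr} adapted to this situation. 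This globalizing/boundary step is where most of the real work lies.
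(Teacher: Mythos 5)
Your overall strategy (match the upper bound of Theorem \ref{thm_standard} by pairing components, feeding each pair to Proposition \ref{prop_constr}, and gluing with Lemma \ref{prel_lem_lb}) is the right one and is essentially what the paper does. However, your proposal has a genuine, self-acknowledged gap --- the ``boundary case'' of a good/non-good pair where Proposition \ref{prop_constr} falls one short of what is needed --- and this gap stems from a misreading of the hypothesis. The intended reading (made explicit in the paper's proof) is that \emph{every} $m_i$ satisfies $\{\log_2 m_i\} \ge \{\log_2 3\}$ (which already forces $m_i \ge 3$), and that \emph{in addition} at least $\lfloor r/2\rfloor$ of them satisfy $m_i \ge 4$. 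Under this reading there are no ``non-good'' components at all: sort so that $m_1 \le \dots \le m_r$, put $s = \lfloor r/2\rfloor$, and pair $m_i$ with $m_{i-s}$ for $i \in [r-s+1,r]$. The larger member of each pair is $\ge 4$, so Proposition \ref{prop_constr} applies, and \emph{both} members satisfy $\{\log_2 m\} \ge \{\log_2 3\}$, whence $\lfloor \log_2(m/3)\rfloor = \lfloor\log_2 m\rfloor - 1$ for both and every pair yields $\Dav_{\pm}(C_{m_i}\oplus C_{m_{i-s}}) \ge \lfloor\log_2 m_i\rfloor + \lfloor\log_2 m_{i-s}\rfloor + 2$. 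Gluing the $s$ pairs (and, for odd $r$, the leftover cyclic factor $C_{m_1}$) via Lemma \ref{prel_lem_lb} gives the lower bound $\sum_{i=1}^r\lfloor\log_2 m_i\rfloor + \lfloor r/2\rfloor + 1$, with no combinatorial rearrangement lemma and no boundary case.

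Relatedly, you have the role of the hypothesis $\sum_{i=1}^r\{\log_2 m_i\} < \lfloor r/2\rfloor + 1$ backwards. It is not needed to ``arrange enough carrying pairs'': every pair carries automatically, since $\{\log_2 m_i\} + \{\log_2 m_{i-s}\} \ge 2\{\log_2 3\} > 1$. Rather, it caps the \emph{upper} bound. Writing $F = \sum_i\{\log_2 m_i\}$, the upper bound is $\sum_i\lfloor\log_2 m_i\rfloor + \lfloor F\rfloor + 1$, and the hypothesis forces $\lfloor F\rfloor \le \lfloor r/2\rfloor$, so the upper bound does not exceed the lower bound just constructed (in fact $F > r\{\log_2 3\} > \lfloor r/2\rfloor$ gives $\lfloor F\rfloor = \lfloor r/2\rfloor$ exactly). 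As it stands your proof is incomplete --- you explicitly defer the step ``where most of the real work lies'' --- but once the hypothesis is read as above, that step disappears and the rest of your outline closes into the paper's argument.
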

\begin{proof}
We know that $\Dav_{\pm}( \oplus_{i=1}^r C_{m_i}) \le  \lfloor \sum_{i=1}^r \log_2 m_i \rfloor +1$ by Theorem \ref{thm_standard}.
So suppose all assumptions are fulfilled and we need to show that this is also a lower bound. Let $s = \lfloor r/2 \rfloor$. Without restriction we assume $m_1 \le \dots \le m_r$; note that $m_i$ is at least $3$ by the condition $\{\log_2 m_i \} \ge \{\log_2 3 \}$.
Moreover, we know that for each $i \in [r-s+1,r]$ we have $m_{i}\ge 4$.
Thus for each such $i$, we know by Proposition \ref{prop_constr}
\[\Dav_{\pm}(C_{m_i}\oplus  C_{m_{i-s}}) \ge \lfloor \log_2 m_{i}  \rfloor + \lfloor \log_2 m_{i-s}  \rfloor  + 2,\] and thus by repeatedly using Lemma \ref{prel_lem_lb} and possibly the lower bound for cyclic groups we get
a lower bound of $\sum_{i=1}^r \lfloor \log_2 m_i  \rfloor  + \lfloor r/2 \rfloor  + 1$
Now, the condition $\sum_{i=1}^r \{\log_2 m_i \} < \lfloor r/2 \rfloor + 1$ precisely guarantees that this lower bound indeed matches the upper bound.
\end{proof}
We point out that this result becomes vacuous if $r$ is too large, since then there are no $m_i$ such that the conditions can hold.
However, the result is meaningful for even $r$ up to $10$, and for $r$ equal to $3$ and $5$.

We phrase two special cases separately; for the latter an additional result is used in the proof.
Note that in the first result the condition $n\ge 4$ is essential; the situation for $n=3$ is different (see Theorem \ref{thm_el3}).

\begin{theorem}
\label{thm_lb}
Let $n \ge 4$ be an integer with $\{\log_2 n \} \ge \{\log_2 3 \}$.
If $r$ is a positive integer such that \[\{\log_2 n \} < \frac{\lfloor r/ 2 \rfloor + 1 }{r},\]
then $\Dav_{\pm}(C_n^r) = \lfloor r \log_2 n \rfloor +1$.
\end{theorem}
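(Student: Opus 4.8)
The plan is to derive Theorem \ref{thm_lb} as a direct consequence of Proposition \ref{prop_lb} applied to the group $C_n^r$, i.e., with $m_1 = \dots = m_r = n$. First I would verify that the hypotheses of Proposition \ref{prop_lb} are met. Since $n \ge 4$ we have $m_i = n \ge 4$ for \emph{all} $i$, so certainly at least $\lfloor r/2 \rfloor$ of the $m_i$ satisfy $m_i \ge 4$; and since $\{\log_2 n\} \ge \{\log_2 3\}$ by assumption, all $r$ of the $m_i$ satisfy $\{\log_2 m_i\} \ge \{\log_2 3\}$, so again at least $\lfloor r/2 \rfloor$ of them do. The remaining hypothesis of Proposition \ref{prop_lb} is $\sum_{i=1}^r \{\log_2 m_i\} < \lfloor r/2 \rfloor + 1$, which in our case reads $r\{\log_2 n\} < \lfloor r/2 \rfloor + 1$, i.e., exactly the assumed inequality $\{\log_2 n\} < (\lfloor r/2 \rfloor + 1)/r$. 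Hence Proposition \ref{prop_lb} applies and yields
\[
\Dav_{\pm}(C_n^r) = \Bigl\lfloor \sum_{i=1}^r \log_2 n \Bigr\rfloor + 1 = \lfloor r \log_2 n \rfloor + 1,
\]
which is the claimed equality.

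There is essentially no obstacle here beyond bookkeeping: the theorem is a specialization of the proposition, and the only thing to check is that the three hypotheses of Proposition \ref{prop_lb} transcribe correctly under the substitution $m_i = n$. The one point worth a sentence of care is the role of the condition $n \ge 4$ (as opposed to $n \ge 3$): it is needed so that Proposition \ref{prop_constr}, which underlies Proposition \ref{prop_lb}, can be invoked on the pairs $C_{m_i} \oplus C_{m_{i-s}}$ — Proposition \ref{prop_constr} requires one of the two moduli to be at least $4$. Since here every modulus equals $n \ge 4$, this is automatic, but it explains why the borderline case $n = 3$ is excluded and handled separately (via Theorem \ref{thm_el3}).

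Optionally, I would add a remark on the range of $r$ for which the statement is non-vacuous, paralleling the comment after Proposition \ref{prop_lb}: the inequality $\{\log_2 n\} < (\lfloor r/2 \rfloor + 1)/r$ forces $\{\log_2 n\}$ to be bounded above by a quantity that tends to $1/2$ from above, so for large $r$ the hypothesis $\{\log_2 n\} \ge \{\log_2 3\} = \log_2(3/2) \approx 0.585$ becomes incompatible with it; concretely the result says something new only for small $r$, and one can record for which $n$ and $r$ both inequalities $\{\log_2 3\} \le \{\log_2 n\} < (\lfloor r/2\rfloor+1)/r$ can simultaneously hold. This is not needed for the proof itself, so I would keep it brief or relegate it to the discussion following the theorem.
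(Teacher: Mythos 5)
Your proposal is correct and matches the paper's proof, which simply states that the theorem is a direct application of Proposition \ref{prop_lb}; your verification of the three hypotheses under the substitution $m_i = n$ is exactly the bookkeeping the paper leaves implicit. The supplementary remarks on the role of $n \ge 4$ and on the range of $r$ for which the statement is non-vacuous likewise agree with the paper's surrounding discussion.
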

\begin{proof}
This is a direct application of Proposition \ref{prop_lb}.
\end{proof}
Again, the above result is vacuous for $r$ too large, see above for details. It covers in a certain sense the opposite end of the range of $\{\log_2 n \}$, relative to the one dealt with by $\eqref{eq_AGS}$. That inequality directly yields $\Dav_{\pm}(C_n^r) = \lfloor r \log_2 n \rfloor +1$ for $\{\log_2 n \} < 1/r$.

\begin{theorem}
\label{thm_33n}
Let $n \ge 2$ be a positive integer such that $ \{ \log_2 (3n)\} < 1 - \{\log_2 3 \}$ or $ \{ \log_2 (3n)\} \ge \{\log_2 3 \}$. Then
\[ \Dav_{\pm} (C_3 \oplus C_{3n}) = \lfloor \log_2 (9n)\rfloor + 1\]
\end{theorem}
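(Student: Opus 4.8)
The plan is to match the general upper bound $\Dav_{\pm}(C_3 \oplus C_{3n}) \le \lfloor \log_2(9n)\rfloor + 1$ from \eqref{eq_AGS} (recall that $|C_3\oplus C_{3n}| = 9n$) with a lower bound coming from the construction in Proposition \ref{prop_constr}. Note that the standard lower bound of Theorem \ref{thm_standard}, applied to the decomposition $C_3 \oplus C_{3n}$, only yields $\lfloor \log_2(3n)\rfloor + 2$, which falls short of the upper bound by one precisely when $\{\log_2(3n)\}$ is small; so the new construction is genuinely needed here (this is in fact one source of examples with $\Dav_{\pm}(G) > \Dav_{\pm}^{\ast}(G)$, e.g.\ $G = C_3\oplus C_{27}$, corresponding to $n=9$).

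To get the lower bound I would apply Proposition \ref{prop_constr} with $m_1 = 3n$ and $m_2 = 3$; the hypotheses $m_1 \ge 4$ and $m_2 \ge 3$ hold since $n\ge 2$. This gives
\[\Dav_{\pm}(C_3 \oplus C_{3n}) \ge \lfloor \log_2 n\rfloor + \lfloor \log_2 1\rfloor + 4 = \lfloor \log_2 n\rfloor + 4.\]

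It then remains to show that the hypothesis on $n$ forces $\lfloor \log_2 n\rfloor + 4 = \lfloor \log_2(9n)\rfloor + 1$, i.e.\ $\lfloor \log_2(9n)\rfloor = \lfloor \log_2 n\rfloor + 3$. Writing $\log_2(9n) = \lfloor\log_2 n\rfloor + 2 + \bigl(\{\log_2 n\} + 2\{\log_2 3\}\bigr)$ and using that $2\{\log_2 3\} \in (1,2)$, this floor equals $\lfloor\log_2 n\rfloor + 3$ exactly when $\{\log_2 n\} + 2\{\log_2 3\} < 2$, that is, when $\{\log_2 n\} < 2\bigl(1 - \{\log_2 3\}\bigr)$. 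So the claim reduces to checking that the stated hypothesis is equivalent to this last inequality. This is a short case distinction according to whether $\{\log_2 3\} + \{\log_2 n\}$ is $<1$ or $\ge 1$, i.e.\ whether $\{\log_2(3n)\}$ equals $\{\log_2 3\} + \{\log_2 n\}$ or $\{\log_2 3\} + \{\log_2 n\} - 1$: in the first case $\{\log_2(3n)\} \ge \{\log_2 3\}$ holds automatically and $\{\log_2 n\} < 1 - \{\log_2 3\} < 2\bigl(1-\{\log_2 3\}\bigr)$; in the second case $\{\log_2(3n)\} \ge \{\log_2 3\}$ is impossible while $\{\log_2(3n)\} < 1 - \{\log_2 3\}$ rearranges to exactly $\{\log_2 n\} < 2\bigl(1-\{\log_2 3\}\bigr)$. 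Comparing the two bounds then yields the asserted equality.

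There is no substantial obstacle here; the only point requiring care is the fractional-part bookkeeping in the last step, in particular the boundary cases—for instance $n=2$, where $\{\log_2(3n)\} = \{\log_2 3\}$ and the hypothesis is satisfied through the second alternative with equality.
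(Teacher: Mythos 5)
Your proof is correct, and it is organized differently from the paper's. The paper splits according to the two alternatives in the hypothesis: when $\{\log_2(3n)\}<1-\{\log_2 3\}$ the standard lower bound of Theorem \ref{thm_standard} already matches the upper bound, and when $\{\log_2(3n)\}\ge\{\log_2 3\}$ it invokes Proposition \ref{prop_lb} (which internally rewrites $\lfloor\log_2(m/3)\rfloor$ as $\lfloor\log_2 m\rfloor-1$ using that hypothesis, and then calls Proposition \ref{prop_constr}). You instead apply Proposition \ref{prop_constr} once, directly, with $m_1=3n$ and $m_2=3$, for which the quantity $\lfloor\log_2(m_1/3)\rfloor+\lfloor\log_2(m_2/3)\rfloor+4=\lfloor\log_2 n\rfloor+4$ is computed exactly with no rounding loss; the whole theorem then reduces to the single arithmetic equivalence ``hypothesis $\iff\{\log_2 n\}<2(1-\{\log_2 3\})\iff\lfloor\log_2(9n)\rfloor=\lfloor\log_2 n\rfloor+3$,'' which you verify correctly (I checked the case distinction on whether $\{\log_2 3\}+\{\log_2 n\}$ exceeds $1$; it is right, including the boundary case $n=2$). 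This buys a uniform treatment of both alternatives and makes transparent why the hypothesis is exactly the right condition, at the cost of hiding the fact that in the first alternative the new construction is not actually needed. One small slip in your motivational remark: the standard lower bound $\lfloor\log_2(3n)\rfloor+2$ falls short of the upper bound precisely when $\{\log_2(3n)\}\ge 1-\{\log_2 3\}$, i.e.\ when $\{\log_2(3n)\}$ is \emph{large}, not small; this does not affect the proof. Your example $C_3\oplus C_{27}$ (value $7$ versus $\Dav_{\pm}^{\ast}=6$) is a valid instance of $\Dav_{\pm}(G)>\Dav_{\pm}^{\ast}(G)$.
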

\begin{proof}
By Theorem \ref{thm_standard} $\lfloor \log_2 (9n)\rfloor + 1$ is an upper bound; this is true for any $n$.
If $ \{ \log_2 (3n)\} < 1 - \{\log_2 3 \}$, then $\lfloor \log_2 (9n)\rfloor = \lfloor \log_2 (3)\rfloor + \lfloor \log_2 (3n)\rfloor $ and the claim follows by invoking the (standard) lower bound from Theorem \ref{thm_standard}.

If $ \{ \log_2 (3n)\} \ge \{\log_2 3 \}$, we can apply Proposition \ref{prop_lb} to get a lower bound of $\lfloor \log_2 (3)\rfloor + \lfloor \log_2 (3n)\rfloor + 2$, which equals $\lfloor \log_2 (9n)\rfloor + 1$.
\end{proof}

Results analogous to the one above could be established in other situations, for example for other groups of rank two, yet the situation here is particularly pleasant due to the special role of $3$ in Proposition \ref{prop_lb}.

\section{Improving the upper bound}

In the preceding sections we focused on lower bounds for $\Dav_{\pm}(G)$.
In this section we focus on bounding $\Dav_{\pm}(G)$ from above and show that
the general upper bound $\lfloor \log_2 |G| \rfloor +1$, sometimes can also be improved.

We start by recalling a special case of a result of Thangadurai \cite[Corollary 1.1]{thanga-paper}.
\begin{theorem}
\label{thm_el3}
Let $G$ be an elementary $3$-group. Then $\Dav_{\pm}(G)= \rank (G)+1$.
\end{theorem}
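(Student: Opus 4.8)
The plan is to prove the two inequalities separately; the lower bound is essentially already available, and the upper bound reduces to a one-line linear algebra argument over $\mathbb{F}_3$.

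For the lower bound, write $r = \rank(G)$, so $G \cong C_3^r$, and apply Theorem \ref{thm_standard} to the decomposition of $G$ into $r$ copies of $C_3$: it yields $\Dav_{\pm}(G) \ge \sum_{i=1}^r \lfloor \log_2 3 \rfloor + 1 = r+1$. (Equivalently, and more concretely, if $(e_1,\dots,e_r)$ is a basis of $G$ then the sequence $e_1 \dots e_r$ has no plus-minus weighted zero-subsum: for any nonempty $I \subset [1,r]$ and any $a_i \in \{+1,-1\}$ one has $a_i \not\equiv 0 \pmod 3$, so $\sum_{i \in I} a_i e_i \neq 0$ by independence of the $e_i$.)

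For the upper bound it suffices to show that every sequence $S = g_1 \dots g_{r+1}$ over $G$ of length exactly $r+1$ has $0$ as a plus-minus weighted subsum, since any longer sequence contains such a subsequence. Identify $G$ with the $\mathbb{F}_3$-vector space $\mathbb{F}_3^r$. Being $r+1$ vectors in an $r$-dimensional space, $g_1,\dots,g_{r+1}$ are linearly dependent, so there exist $c_1,\dots,c_{r+1} \in \mathbb{F}_3$, not all zero, with $\sum_{i=1}^{r+1} c_i g_i = 0$ in $G$. Let $I = \{\, i \in [1,r+1] : c_i \neq 0 \,\}$, which is nonempty. For each $i \in I$ the scalar $c_i$ lies in $\mathbb{F}_3 \setminus \{0\} = \{1,2\}$, and since $1$ and $2$ are the reductions modulo $3 = \exp(G)$ of $+1$ and $-1$, there is $a_i \in \{+1,-1\}$ with $a_i g_i = c_i g_i$. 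Then $\sum_{i \in I} a_i g_i = \sum_{i \in I} c_i g_i = \sum_{i=1}^{r+1} c_i g_i = 0$, so $0 \in \Sigma_{\{+1,-1\}}(S)$, which gives $\Dav_{\pm}(G) \le r+1$ and hence equality.

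There is no genuine obstacle; the only point deserving attention is the passage modulo the exponent. It is crucial that $\{+1,-1\}$ maps onto all of $(\mathbb{Z}/3\mathbb{Z})^{\ast}$, which is precisely what lets us replace a linear dependence relation with arbitrary nonzero $\mathbb{F}_3$-coefficients by one with $\pm 1$ weights; and $r+1$ vectors of $\mathbb{F}_3^r$ always admit such a dependence. (For primes $p \ge 5$ the image of $\{+1,-1\}$ is a proper subset of $(\mathbb{Z}/p\mathbb{Z})^{\ast}$, so the problem over elementary $p$-groups is no longer purely linear-algebraic; and for $p = 2$ the weight set reduces to $\{1\}$, recovering the classical Davenport constant of an elementary $2$-group.)
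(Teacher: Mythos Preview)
Your proof is correct and follows essentially the same approach as the paper: the paper routes the argument through the fully weighted Davenport constant (Theorem~\ref{thm_full}), but the core content is identical --- the lower bound via an independent generating set and the upper bound via linear dependence of $r+1$ vectors in $\mathbb{F}_3^{\,r}$, together with the observation that $\{+1,-1\}$ surjects onto $(\mathbb{Z}/3\mathbb{Z})^{\ast}$.
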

From this it directly follows that for elementary $3$-groups $\Dav_{\pm}(G)=\Dav_{\pm}^{\ast}(G)$, and that the upper bound  $\lfloor \rank (G) \log_2 3 \rfloor + 1 $ does not give the correct value for rank at least two.

This example of an improvement of the upper bound is however not quite satisfactory, in the sense that one could believe this is the only case where this happens due to the fact that this result actually is more naturally interpreted as one for a different type of weighted Davenport constant, and was also obtained originally in that context, which just happens to coincide with the plus-minus weighted Davenport constant in this case. We summarize this in the following subsection.

\subsection{Interlude on the fully weighted Davenport constant}

Let $G$ be a finite abelian group and let $n$ denote its exponent.
We refer to $\Dav_{\Z \setminus n \Z}(G)$, or equivalently $\Dav_{[1,n-1]}(G)$,  as the fully weighted Davenport constant.

The reason we call it fully weighted Davenport constant is that this set is
the largest one for which it makes sense to consider it; if the set of weights $A$ contains some multiple of the exponent then of course $\Dav_{A}(G)= 1$.

We determine this constant for every finite abelian groups; in \cite{thanga-paper} it was determined for elementary $p$-groups.

\begin{theorem}
\label{thm_full}
Let $G$ be a finite abelian group and let $n$ denote its exponent.
Then  $\Dav_{\Z \setminus n \Z}(G) = \rank_n(G)+1$.
\end{theorem}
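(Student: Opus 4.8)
The plan is to prove $\Dav_{\Z \setminus n\Z}(G) = \rank_n(G) + 1$ by establishing the two matching inequalities, writing $G \cong C_{n_1} \oplus \dots \oplus C_{n_r}$ with $1 < n_1 \mid \dots \mid n_r = n$ and setting $k = \rank_n(G)$, so that $n_{r-k+1} = \dots = n_r = n$ while $n_{r-k} < n$.

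For the lower bound $\Dav_{\Z \setminus n\Z}(G) \ge k+1$, I would exhibit a sequence of length $k$ with no nonempty subsequence admitting $0$ as a fully weighted sum. Fix a basis $(e_1, \dots, e_r)$ adapted to the decomposition above and consider the sequence $S = e_{r-k+1} \dots e_r$, i.e.\ one copy of each of the $k$ basis elements of order exactly $n$. Any nonempty subsequence $T$ is $\prod_{i \in I} e_i$ for some nonempty $I \subseteq [r-k+1, r]$, and a weighted sum is $\sum_{i \in I} a_i e_i$ with $a_i \in [1, n-1]$; since the $e_i$ are independent and each $a_i e_i \ne 0$ (as $a_i \not\equiv 0 \bmod n$), this sum is nonzero. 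Hence $0 \notin \Sigma_{[1,n-1]}(S)$, giving $\Dav_{\Z\setminus n\Z}(G) \ge k+1$.

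For the upper bound $\Dav_{\Z \setminus n\Z}(G) \le k+1$, I would take any sequence $S = g_1 \dots g_{k+1}$ of length $k+1$ and produce a fully weighted zero-subsum. The natural approach is to use the homomorphism $\phi\colon G \to G/nG \cong C_{n_1/\gcd(\dots)} \oplus \dots$; more cleanly, project onto the ``top'' part. Write each $g_j$ in coordinates $g_j = \sum_{i=1}^r a_{j,i} e_i$. Consider the map $\psi\colon G \to (C_n)^k$ sending $g$ to its coordinates in the $k$ components of order $n$. The point is that each nonzero element $h$ of $C_n$ satisfies $-h \in [1,n-1]h$ trivially, but more to the point: for any $g_j$ whose image under $\psi$ is zero, $g_j$ already lies in $\langle e_1, \dots, e_{r-k}\rangle$ — but that subgroup can have rank up to $r-k$, which need not immediately help. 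Instead the cleaner route: reduce mod $\langle e_1,\dots,e_{r-k}\rangle$ to land in $(C_n)^k$, apply the known elementary-group-type result there (or an inductive argument), and lift. Concretely, I expect the key step to be: among $g_1, \dots, g_{k+1}$, after suitable scaling by units one can arrange their $\psi$-images to be dependent over $\Z$ in a controlled way, and then a greedy/successive-elimination argument using that weights range over all of $[1,n-1]$ kills the coordinates one at a time.

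The main obstacle will be the upper bound, specifically handling the interaction between the $k$ components of full order $n$ and the remaining $r-k$ components of smaller order: a weighted sum that vanishes in the top $(C_n)^k$ part need not vanish in $G$. The fix I anticipate is to first apply $\Dav_{[1,n-1]}((C_n)^k) = k+1$ (this is the elementary-group case from \cite{thanga-paper}, or follows by the same argument since every residue class mod $n$ other than $0$ is available as a weight and $C_n$ itself has fully weighted Davenport constant $2$), obtaining a nonempty $I$ and weights $a_i$ with $\sum_{i \in I} a_i \psi(g_i) = 0$; then $h := \sum_{i\in I} a_i g_i$ lies in $\langle e_1, \dots, e_{r-k}\rangle =: G'$, and one must absorb $h$. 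Here I would either invoke that we still have freedom to adjust the $a_i$ within $[1,n-1]$ (each adjustment changes $h$ by a multiple of $g_i$'s $G'$-part and of $n$ times something in the top part, the latter being zero in $C_n$), or, if $h \ne 0$, restart the argument using Lemma~\ref{prel_lem_lb}-style bookkeeping; since $\exp(G') \mid n$, every weight in $[1, \exp(G')-1]$ is still legal, and the problem genuinely reduces. Making this reduction airtight — ensuring the index set and weights can be chosen consistently so that the final sum is zero in all of $G$ — is the delicate part, and I would spend the bulk of the proof there, likely by induction on $|G|$ with the rank-$n$ statement as the induction hypothesis.
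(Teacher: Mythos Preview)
Your lower bound is essentially the paper's. For the upper bound, however, you have overlooked the two observations that make the argument immediate, and the projection-and-lift route you propose has a genuine gap precisely at the step you flag as delicate.

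The paper argues as follows. If some $g_j$ in $S$ has order $d<n$, then $d\in[1,n-1]\subset\Z\setminus n\Z$ and the single term $g_j$ with weight $d$ already gives $dg_j=0$. So one may assume every $g_j$ has order exactly $n$. Since $\rank_n(G)=k$, any family of more than $k$ elements of order $n$ is dependent; hence there is a relation $\sum_j a_j g_j=0$ with not all $a_jg_j=0$. Dropping the indices with $n\mid a_j$ (those terms vanish anyway, as $\mathrm{ord}(g_j)=n$) leaves a nonempty subsequence with weights in $\Z\setminus n\Z$ summing to zero. That is the whole proof.

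In your scheme, once $I$ and weights $a_i$ are fixed with $\sum_{i\in I}a_i\psi(g_i)=0$ in $(C_n)^k$, the residual $h\in G'$ need not vanish, and you cannot perturb any $a_i$ without moving the $(C_n)^k$-component: the only perturbation leaving $\psi$ fixed is by a multiple of $n$, which forces $a_i$ out of $[1,n-1]$. Your inductive fallback does not apply either, since what remains is a single element $h\in G'$, not a sequence of length $\rank_{\exp(G')}(G')+1$ to which the hypothesis could speak. The missing idea is exactly the first observation above: an element of order $d<n$ is killed by the one-term subsum with weight $d$, after which the upper bound is pure dependence of elements of maximal order and no lifting is needed at all.
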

\begin{proof}
We note that by the definition of $\rank_n(G)$ the group $G$ contains an independent subset of elements of order $n$ with cardinality $\rank_n(G)$.
The sequence formed by the elements of such a set cannot contain a $\Z \setminus n \Z$-weighted zero-subsum, since this would contradict the independence of the elements.

Conversely, let $S$ be a sequence of length $\rank_n(G) +1$.
We note that if $S$ contains an element $g$ of order $d$ with $d < n$, then
$d g$ is a  $\Z \setminus n \Z$-weighted zero-subsum.
Thus we may assume that each element in $S$ has order $n$.
Yet, by the definition of $\rank_n(G)$ this implies that the family formed by the elements occurring in $S$ (taking multiplicity into account) is not independent, yielding directly an  $\Z \setminus n \Z$-weighted zero-subsum.
\end{proof}

The proof of Theorem \ref{thm_el3} is now an immediate consequence.

\begin{proof}[Proof of Theorem \ref{thm_el3}]
Since as mentioned in Section \ref{sec_prel} the value of $\Dav_{A}(G)$
 depends only on the image of $A$ under the projection to  $\Z / \exp(G) \Z$, we have that for $G$ an elementary $3$-group the plus-minus weighted Davenport constant is equal to the fully weighted Davenport constant. The result follows by Theorem \ref{thm_full}.
\end{proof}

\subsection{Another example}

In view of the preceding discussion it seems desirable to have at least one example at hand where $\Dav_{\pm}(G)$ is less than the general upper bound, other than a case where it coincides with the fully weighted Davenport constant. We achieve this in the following result.

\begin{theorem} \label{thm_339}
We have
\[\Dav_{\pm}(C_3 \oplus C_3 \oplus C_9) = 6.\]
\end{theorem}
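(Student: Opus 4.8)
The plan is to combine the general lower bound with a dedicated upper bound argument. For the lower bound, $\Dav_{\pm}^{\ast}(C_3 \oplus C_3 \oplus C_9) \ge 1 + \lfloor \log_2 3 \rfloor + \lfloor \log_2 3 \rfloor + \lfloor \log_2 9 \rfloor = 1+1+1+3 = 6$, so Corollary \ref{cor_davpm} (or simply Theorem \ref{thm_standard}) already gives $\Dav_{\pm}(C_3 \oplus C_3 \oplus C_9) \ge 6$. Note the general upper bound only gives $\lfloor \log_2 81 \rfloor + 1 = 7$, so the whole content of the theorem is to show that $7$ is not attained, i.e. that \emph{every} sequence of length $6$ over $G = C_3 \oplus C_3 \oplus C_9$ has a plus-minus weighted zero-subsum. (One should double-check the lower bound direction carefully: since we want the exact value $6$, we must exhibit a sequence of length $5$ with no plus-minus weighted zero-subsum; the standard construction $e_1\, e_2\, 3e_3\, e_3\, 2e_3$, where $e_1,e_2$ generate the two $C_3$ factors and $e_3$ the $C_9$ factor, should work by the coordinate-wise reasoning in the proof of Proposition \ref{prop_constr}.)

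For the upper bound, I would argue as follows. Let $S = g_1 \cdots g_6$ be a sequence over $G$ and suppose, for contradiction, that $0 \notin \Sigma_{\pm}(S)$. First, pass to the quotient $G \to G/3G \cong C_3^3$: the image $\bar S$ has length $6 > \Dav_{\pm}(C_3^3) = 4$ (Theorem \ref{thm_el3}), so some non-empty sub-sum $\sum_{i \in I} a_i \bar g_i = 0$ with $a_i \in \{+1,-1\}$; equivalently $\sum_{i \in I} a_i g_i \in 3G$. Choosing $I$ minimal, one gets control on $|I|$ (it is at most $\Dav_{\pm}(C_3^3) = 4$, and in fact a minimal nonzero-avoiding configuration in $C_3^3$ has a specific structure). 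The element $h = \sum_{i\in I} a_i g_i$ lies in $3G \cong C_3$. The idea is then to produce a \emph{second} such sub-sum, disjoint from or otherwise combinable with the first, landing in $3G$, and use the fact that in $3G \cong C_3$ with weights $\{+1,-1\}$ two elements suffice to hit $0$ unless both are $0$ — but if some such sub-sum already equals $0$ we are done, and otherwise we have two sub-sums in $3G \setminus \{0\}$ which, since $3G \cong C_3$ and $\{+1,-1\}$ acts as $\{+1,-1\}$ there, can be combined (one with weight $+1$, the other with weight $+1$ or $-1$) to give $0$, provided the index sets can be chosen disjoint. Making the index sets disjoint is where the length $6$ (versus $4$) is spent.

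The main obstacle — and the step requiring genuine care — is the bookkeeping that guarantees two \emph{disjoint} non-empty sub-sums landing in $3G$: after extracting a first sub-sum on index set $I_1$ with $|I_1| \le 4$, one needs the remaining sequence $\prod_{i \notin I_1} g_i$ (of length $\ge 2$) together possibly with unused terms of $I_1$ to again map, under $G \to C_3^3$, to something with a plus-minus zero-subsum; length $2$ is not enough on its own for $C_3^3$, so one really has to exploit that $\Dav_{\pm}(C_3^3)=4$ more cleverly, e.g. by first choosing $I_1$ with $|I_1| \le 4$ so that at least $2$ indices remain, then arguing that among those remaining $2$ plus the structure forced on $I_1$ one can still find the needed second configuration — or alternatively by a direct case analysis on the possible images $\bar g_i \in C_3^3$ and the multiplicities with which the (at most $27$, effectively at most $14$ up to sign) elements of $C_3^3$ occur. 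I would expect the cleanest route is the latter: analyze how a length-$6$ sequence over $G$ with no plus-minus zero-subsum must look modulo $3G$ (each such pattern is highly constrained since it must avoid zero-subsums in $C_3^3$, where $\Dav_{\pm} = 4$), and then for each surviving pattern show the $3G$-components are forced into a configuration that does contain a plus-minus zero-subsum in $C_3 \cong 3G$, contradiction.
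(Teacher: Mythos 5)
Your lower bound is fine via Theorem \ref{thm_standard}, but note that the explicit length-$5$ example you propose, $e_1\, e_2\, (3e_3)(e_3)(2e_3)$, actually \emph{does} have a plus-minus weighted zero-subsum, namely $3e_3 - e_3 - 2e_3 = 0$; the correct witness (the one implicit in the proof of Theorem \ref{thm_standard}) is $e_1\, e_2\, e_3\, (2e_3)(4e_3)$. This is harmless since Theorem \ref{thm_standard} already gives the bound, but it signals that the coordinate-wise reasoning needs more care than the sketch suggests.

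The real issue is the upper bound, which is the entire content of the theorem and which your proposal does not prove. The strategy of producing two \emph{disjoint} subsums landing in $3G\cong C_3$ and then cancelling them there is not merely ``a step requiring genuine care''---it provably fails for some length-$6$ patterns modulo $3G$. Over $C_3^3$ a plus-minus weighted zero-sum is the same as an $\mathbb{F}_3$-linear dependence, and the sequence $e_1,\,e_2,\,e_3,\,e_1+e_2,\,e_2+e_3,\,e_1+e_3$ in $\mathbb{F}_3^3$ admits \emph{no} two linear dependences with disjoint supports (every nonzero kernel vector of the corresponding $3\times 6$ matrix has support meeting $\{4,5,6\}$, and a short check shows two such supports cannot be disjoint). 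So the fallback you name---``a direct case analysis on the possible images $\bar g_i$ and the forced $3G$-components''---is not an alternative polish step but is essentially the whole proof, and it is left entirely undone. For comparison, the paper takes a different and fully executed route: it first shows every term of $S$ must have order $9$ (by quotienting by an order-$3$ subgroup and using $\Dav_{\pm}(C_3^3),\Dav_{\pm}(C_3\oplus C_9)\le 5$), then fixes an order-$9$ term $f$, decomposes $G=K\oplus\langle f\rangle$ with $K\cong C_3^2$, and runs a case analysis on the number $t\in\{1,2,3\}$ of terms lying in $\langle f\rangle$, with subcases governed by the multiplicities of the images in $K$; each subcase forces the $\langle f\rangle$-components into an arithmetic contradiction. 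Your proposal, as written, is a plan with a known obstruction rather than a proof.
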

Note that the upper-bound provided by Theorem \ref{thm_standard} in this case is $1 + \lfloor \log_2 81 \rfloor = 7$. The rest of this section is dedicated to the proof of this result.

First, we note that Theorem \ref{thm_standard} yields $6$ as a lower bound. Let $S$ be a sequence over $G=C_3 \oplus C_3 \oplus C_9$ of length $6$. We need to show that $S$ has a plus-minus weighted zero-subsum, and assume for a contradiction it has none.

First, we show that $S$ contains only elements of order $9$.
Assume to the contrary that $S$ contains some element $h$ of order $3$.
Let $H$ denote the subgroup generated by $h$. Since $G/H$ is either isomorphic to $C_3^3$ or to $C_3 \oplus C_9$, we have $\Dav_{\pm}(G/H)\le 5$, where we use Theorem \ref{thm_standard}. Thus, $h^{-1}S$ has a plus-minus weighted subsum that is an element of $H$, so equal to $-h,h$ or $0$. In each case, we get a plus-minus weighted zero-subsum of $S$. Consequently, each element in $S$ has order $9$.

Let $f$ be some element occurring in $S$, recall it is of order $9$, and let $G = K \oplus \langle f \rangle$ where $K$ is a subgroup of $G$ isomorphic to $C_3^2$.

We consider the number of elements in $S$ that can be contained in $\langle f \rangle $. Since $\Dav_{\pm}(\langle f \rangle)= 4$, this number $t$ is at most $3$. We distinguish cases according to the value of $t$.

\medskip
\noindent
\textbf{Case $t=3$}: Denote the subsequence of $S$ of elements in $\langle f \rangle $  by $T$. Since $\Dav_{\pm}(C_9)=4$, the sequence $T$ is a sequence of maximal length without plus-minus weighted zero-subsum in $\langle f \rangle$, thus, since otherwise we could find a longer sequence, $\Sigma_{\pm}(T)\cup \{0\} = \langle f \rangle $; note that the inverse of each element in $\Sigma_{\pm}(T)$  is also an element of this set and the same is thus true for the complement of this set. Moreover, since $\Dav_{\pm}(K)=3$, it follows that $T^{-1}S$ has a plus-minus weighted subsum with sum in $\langle f \rangle$. Thus, we would get a plus-minus weighted zero-subsum of $S$, contradicting our assumption.

\medskip

\noindent
\textbf{Case $t=2$}: Since each element in $S$ has order $9$ and since we can replace, without restriction, an element by its inverse it follows that we can assume the two elements are $f,2f$ or $f,4f$. Yet, since $2 (4f)= -f$, and again changing the sign, we can in fact assume that the two elements are $f$ and $2f$. The set of plus-minus weighted subsums of these two elements augmented by $0$, is thus $\{-3f,-2f,-f,0,f,2f,3f\}$.

The set  $\Sigma_{\pm}((f (2f))^{-1}S) \cap \langle f\rangle$ must be disjoint from $\{-3f,-2f,-f,0,f,2f,3f\}$, as otherwise we would get a plus-minus weighted zero-subsum, so it is a subset of $\{-4f , 4f\}$.

We consider various subcases according to the structure of $R=(f (2f))^{-1}S$, more precisely of the structure of its image under the standard map  $\pi : K \oplus \langle f \rangle \to K$.

\smallskip
\noindent
\emph{Subcase 1.} Suppose $\pi(R)$ contains some element with multiplicity at least $3$. Say, $R$ contains a subsequence $(e+f_1)(e+f_2)(e+f_3)$ with $e\in K$ and $f_i \in \langle f \rangle$.

We then get the plus-minus weighted subsums $f_i - f_j$ for all distinct $i,j \in \{1,2,3\}$. By the observation made above they are all elements of $\{-4f,4f\}$. Yet, since $(f_3 -f_1) = (f_3-f_2) + (f_2 - f_1)$, this is impossible---the sum of two (possibly equal) elements of $\{-4f,4f\}$ cannot also be an element of this set.

\smallskip
\noindent
\emph{Subcase 2.} Suppose $\pi(R)$ contains two distinct elements with multiplicity at least $2$. Say, $R$ contains a subsequence $(e+f_1)(e+f_2)(e'+f_1')(e'+f_2')$ with $e,e'\in K$ and $f_i,f_i' \in \langle f \rangle$. We get that $(f_1-f_2)$ and $(f_1'-f_2')$ yet also $(f_1-f_2) + (f_1'-f_2')$, are all elements of $\{-4f,4f\}$, which is again impossible.

\smallskip
\noindent
\emph{Subcase 3.} Suppose $\pi(R)$ contains exactly one element with multiplicity two.
Since we can always replace an element by its inverse, we can in fact assume that $\pi(R) = e^2e'(-e-e')$ with independent $e,e'\in K$. Write $R= (e+f_1)(e+f_2)(e'+f')(-e-e'+f'')$ where the elements denote by $f$'s are in $\langle f \rangle$. It follows that $f_2 - f_1$, $f_1+f' + f''$, and $f_2+f'+f''$ are all elements of $\{-4f, 4f\}$. Yet since the last of the three, is the sum of the former two, this is impossible.

\smallskip
\noindent
\emph{Subcase 4.} Suppose $\pi(R)$ is squarefree. Again since we can replace an element by its inverse, we can assume $R = (e_1+f_1)(e_2+f_2)(-e_1-e_2+f_{1,2})(e_1-e_2+f_{1,2}')$ with $e_1,e_2$ independent elements of $K$ and $f_1,f_2, f_{1,2}, f_{1,2}' \in \langle f \rangle$.
We find that all the elements
$h_1= f_1 + f_2 + f_{1,2}$, $h_2 = -f_1 +f_2 +f_{1,2}'$, $h_3= -f_2 +f_{1,2}+f_{1,2}'$, and $h_4 = f_1 -f_{1,2}+f_{1,2}'$ are in $\{-4f,4f\}$.
And without restriction we may assume $h_1 = 4f$.

We consider first $h_1 + h_2 + 2h_3= 3(f_{1,2}+f_{1,2}')$;
this is an element of $4f + \{-4f, 4f\} + \{f,-f\}$ and an element of order $3$. Thus, $h_2= 4f$ and $h_3= -4f$.
Now, consider $h_1-h_2+h_4 = 3f_1$. Yet since we already got $h_1 = h_2$, while $h_4 \in  \{-4f, 4f\}$, this yields a contradiction.

\medskip
\noindent
\textbf{Case $t=1$}: We proceed similarly to the previous case and consider again subcases according to the structure of $R=f^{-1}S$ under the standard map  $\pi : K \oplus \langle f \rangle \to K$.

\smallskip
\noindent
\emph{Subcase 1.} $\pi(R)$ contains some element with multiplicity at least $4$;
denote the corresponding elements in $R$ by $e+f_i$ with $e \in K$ and $f_i \in \langle f \rangle$. Since each element in $S$ has order $9$, it follows that $f_i \in \{f,2f,4f,5f,7f,8f\}$. Yet then there have to be $f_i,f_j$ such that $f_i-f_j=-f$ yielding a plus-minus weighted zero-subsum.

\smallskip
\noindent
\emph{Subcase 2.} $\pi(R)$ contains some element with multiplicity $3$;
denote the corresponding elements in $R$ by $e+f_i$ with $e \in K$ and $f_i \in \langle f \rangle$.

Let us first assume the two remaining elements $g,h$ have the same image under $\pi$ or are inverses, then $g-h$ or $g+h$, resp., yields an element of $\langle f \rangle$. Assume without loss the former. The sequence $(g+h)(gh)^{-1}S$ then contains two elements in $\langle f \rangle$. While this sequence does not fulfil the conditions of the first subcase of the former case in a strict sense, we can observe that in the argument we only used the two elements in $\langle f \rangle$ and the three elements having the same image under $\pi$. Thus, that argument yields a plus-minus weighted zero-subsum of  $(g+h)(gh)^{-1}S$, and consequently of $S$.

So, assume $\pi(g)$ and $\pi(h)$ are independent. There is a choice of weights such that they yield a sum of the form $-e + f'$ for some $f' \in \langle f \rangle$. We then get that for $\{i,j,k\}= \{1,2,3\}$ the sequence $(f'+f_i)(f_j-f_k)f$ does not a have a plus-minus weighted zero-subsum. Thus $f_j-f_k,f'+f_i \in \{\pm 2f, \pm 4f\}$ for $\{i,j,k\}= \{1,2,3\}$. Yet this is impossible, for if
$f'+f_i \in \{\pm 2f, \pm 4f\}$ for each $i$, we certainly get a difference of two of these elements not in $\{\pm 2f, \pm 4f\}$.

\smallskip
\noindent
\emph{Subcase 3.}  $\pi(R)$ contains two distinct elements with multiplicity $2$.
We can assume that these two elements, call them $e_1,e_2$, are not inverses of each other and are thus independent; if they were inverses we are essentially reduced to Subcase 1. Moreover, the remaining element can also be assumed as pairwise independent of these two, otherwise we are essentially reduced to Subcase 2, and indeed we may assume it is equal to $-e_1-e_2$. In total we thus have $R= (e_1 + f_1)(e_1+f_1')(e_2 + f_2)(e_2+f_2') (-e_1 -e_2 + f')$ where the elements denoted by $f$'s are all in $\langle f \rangle$.

Now, since $(f_1-f_1')(f_2-f_2')f$ does not have a plus-minus weighted zero-subsum we may assume that $f_1'-f_1 = 2f$ and $f_2'-f_2=4f$. Moreover, we get that $f_1 +f_2 + f' \notin \{-f,0,f\}$ and likewise for $f_1' +f_2 + f'$ ,$f_1 +f_2' + f'$, and $f_1' +f_2' + f'$.

Yet, combining these observations, we get that $f_1 +f_2 + f' +m \notin \{-f,0,f\}$ for each $m \in \{0,2f, 4f, 6f\}$. Since $-\{0,2f, 4f, 6f\}+ \{-f,0,f\}= \langle f \rangle$, this is however impossible.

\smallskip
\noindent
\emph{Subcase 4.} $\pi(R)$ contains only one element with multiplicity $2$ (and all other elements have multiplicity at most one). Using our usual argument regarding inverses, and the knowledge of the preceding cases, we can assume that $\pi(R)= e_1^2 (e_1+e_2)e_2(-e_1+e_2)$, in other words $\pi(R)$ contains an element from each non-trivial cyclic subgroup of $K$, and one of them twice.

For the remaining argument it will however by advantageous to assume instead, which is however equivalent, that $\pi(R)= e_1(-e_1) (e_1+e_2)e_2(-e_1+e_2)$ and we write $R= (e_1+f_+)(-e_1+f_-)(e_1+e_2+a_1)(e_2+a_2)(-e_1+e_2+a_3)$ with $f$'s and $a$'s in $\langle f \rangle$.

It then follows that $f (f_++f_-)(a_1 +a_2+a_3)$ does not contain a plus-minus weighted zero-subsum. We observe that we may assume that $\{f_++f_-, a_1+a_2+a_3\}=\{2f, 4f \}$; to see this notice for example that we could consider the argument with respect to $-f$ instead of $f$, to put $f_++f_-$ in the desired set, and then consider if necessary the sequence where $(e_1+e_2+a_1),(e_2+a_2),(-e_1+e_2+a_3)$ are replaced by their inverses and consider $-e_2$ instead of $e_2$.

We now list various additional plus-minus weighted zero-subsums of $\pi(R)$ and the conditions the fact that they should not yield plus-minus weighted zero-subsums of $R$ yields for $f_+,f_-$ and $a_1,a_2,a_3$. In the end we will see that it is impossible to fulfill all these conditions.

The following are plus-minus weighted zero-subsums of $\pi(R)$; we put all elements in parenthesis to highlight the choice of weights:

\begin{itemize}
\item
$(e_1+e_2) - (e_2) -(e_1)$, $(e_1+e_2) - (e_2) +(-e_1)$, $(e_1+e_2) - (e_2)+(e_1) -(-e_1)$. The corresponding weighted subsums of $R$ are $a_1-a_2 -f_+$, $a_1-a_2 +f_-$,
$a_1-a_2 +f_+-f_-$.
\item $(e_2)-(-e_1+e_2) -(e_1)$, $(e_2)-(-e_1+e_2) +(-e_1)$, $(e_2)-(-e_1+e_2)+(e_1) -(-e_1)$. The corresponding weighted subsums of $R$ are $a_2-a_3 -f_+$, $a_2-a_3 +f_-$,
$a_2-a_3 +f_+-f_-$.
\item $(-e_1+e_2)-(e_1+e_2) -(e_1)$, $(-e_1+e_2)-(e_1+e_2) +(-e_1)$, $(-e_1+e_2)-(e_1+e_2)+(e_1) -(-e_1)$. The corresponding weighted subsums of $R$ are $a_3-a_1 -f_+$, $a_3-a_1 +f_-$,
$a_3-a_1 +f_+-f_-$.
\end{itemize}
Since $S$ is assumed to have no plus-minus weighted zero-subsum, it follows that the  weighted subsums of $R$ given above are not in $\{-f,0,f\}$. Setting $\mathcal{A}=\{a_1-a_2, a_2-a_3, a_3 -a_1 \}$ and $\mathcal{F} = \{-f_+,f_-, f_+-f_-\}$, this directly translates to
$(\mathcal{F} + \mathcal{A}) \cap \{-f,0,f\} = \emptyset$, and
further to
$\mathcal{A} \cap (-\mathcal{F}+ \{-f,0,f\}) = \emptyset$.
Of course, there would be other ways to rephrase this, but this last form is the one we found most convenient for the remaining argument.
 Now, we write $F=f_+ + f_-$ and $A= a_1 + a_2 + a_3$.
We observe that $-\mathcal{F} = \{f_+, -F +f_+, F -2f_+ \}$.

We know that $F=2f$ or $F=4f$ also recall that $f_+$ and $-F + f_+= -f_-$ have order $9$.

If $F=2f$ for $- \mathcal{F}$ this leaves the following possibilities (we keep the ordering of the elements for clarity):
$\{f, -f, 0 \}$, $\{ 4f, 2f, 3f \}$, $\{-2f, -4f, -3f\}$.
Consequently, for $ -\mathcal{F}  + \{ -f , 0 , f \} $ we have:
$\{-2f,-f,0,f,2f \}$, $\{-4f, f,2f, 3f, 4f\}$, $\{-4f, -3f, -2f, -f, 4f \}$.

If $F=4f$ for $- \mathcal{F}$ this leaves the following possibilities (we keep the ordering of the elements for clarity):
$\{2f, -2f, 0\}$, $(-4f, f, 3f)$, $(-f,4f,-3f)$.
Thus, for $ - \mathcal{F}  + \{ -f , 0 , f \} $ we have:
$\{-3f, -2f, -f, 0, f, 2f, 3f\}$, $\{-4f, -3f, 0, f, 2f, 3f, 4f\}$, $\{-4f, -3f, -2f, -f, 0, 3f, 4f\}$.

\smallskip
It now remains to decide if one of these sets can have non-empty intersection with the set $\mathcal{A}$ (for a suitable choice of parameters). We establish that this is indeed impossible.

First, we observe that the set $\mathcal{A}$ is of the form $\{c,d , -(c+d)\}$; yet some of these elements might be equal.
For the case  $F=4f$ only this is already sufficient to assert directly the impossibility by direct inspection; the complements of the three possible sets for $ - \mathcal{F}  + \{ -f , 0 , f \} $ listed above do not contain a subset of this form.
For the case $F=2f$ a finer analysis is needed. We start it with a preparatory observation.

We note that $a_3 -a_1 = A  -2a_1-a_2$ and  $a_2 - a_3 = -((a_3 - a_1) +(a_1-a_2))$, so
$\mathcal{A}= \{a,-(a+b),b\}$ with $a=a_1-a_2$ and $b = A - 2a_1 -a_2$.
It is now key to note that, for fixed $A$, the map from $C_9^2$ to itself given by $(a_1,a_2) \mapsto (a,b)$ with $a,b$ as just defined is not surjective.
More precisely,  after some transformations, we get that $(a,b)$ is the image of $(a_1, a_2)$ if and only if
$(a_1-a_2)= a$ and $-3a_2 = 2a+b-A$. Thus, for $(a,b)$ to be an image under the above map, we need that $2a+b-A$ is an element of order (at most) $3$.
Letting $a',b',A'$ be integers yielding $a,b,A$, resp., after multiplication with $f$, we can express this as
$2a'+b'-A' \equiv 0 \pmod{3}$ or $b'\equiv a'+A' \pmod{3}$.

After this preparation let us address the specific case $F=2f$ and (thus) $ A= 4f$ and, say, $A'=4$.

We consider each of the three sets individually, recall we need to find $a,b$ fulfilling  $b'\equiv a'+A' \pmod{3}$ such that $\{a,b,-(a+b)\}$ does not intersect the set.

For  $\{-2f,-f,0,f,2f \}$ we have: if $a=3f$, then $b=4f$; if $a=4f$, then $b=-4f$; if $a=-3f$, then $b=4f$; if $a=-4f$, then $b= \pm 3f$. In every case $-(a+b)$ is in the set.

For  $\{-4f, f,2f, 3f, 4f\}$ we have: if $a=-3f$, then $b= -2f$; if $a=-2f$, then $b=-f$; if $a=-f$, then $b=0$ or $b=3f$, if $a=0$, then $b=-2f$.
In every case $-(a+b)$ is in the set.

For $\{-4f, -3f, -2f, -f, 4f \}$ we have: if $a=0$, then $b=f$; if $a=f$, then $b=2f$; if $a=2f$, then $b=3f$ or $b=0$; if $a=3f$, then $b=f$.
In every case $-(a+b)$ is in the set.

So, there is no possibility to match all the conditions, establishing also in this final subcase a contradiction to the assumption that $S$, a sequence of length $6$, does not have a plus-minus weighted zero-subsum. This completes  the argument for $\Dav_{\pm}(C_3^2 \oplus C_9) \le 6$ and establishes the result.

\section{Concluding remarks and open problems}
\label{sec_rem}

In the following result we collect together some results on exact values of the plus-minus weighted
Davenport constant for certain groups; the specific selection of groups is done in such a way that
the value is given for all groups of order at most $100$ with one exception. In particular we establish the following result.

\begin{theorem}
\label{thm_100}
Let $G$ be a finite abelian group with $|G| \le 100$.
Then, $\Dav_{\pm}(G) = \lfloor \log_2 |G| \rfloor +1$, except for $G$ isomorphic to $C_3^2$, $C_3^3$, $C_3^2\oplus C_9$, where the values  are $3,4,6$, respectively, or $C_5 \oplus C_{15}$ where the value is either $6$ or $7$.
\end{theorem}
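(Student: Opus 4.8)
The plan is to play the universal upper bound $\Dav_{\pm}(G)\le\lfloor\log_2|G|\rfloor+1$ of Theorem~\ref{thm_standard} against a small toolbox of lower bounds, sorting the finitely many abelian groups of order at most $100$ into a few families. The trivial families are the cyclic groups, where the sequence $e(2e)\dots$ from the proof of Theorem~\ref{thm_standard} (equivalently \eqref{eq_AGS}) already gives a matching lower bound, and the $2$-groups, where the lower bound of \eqref{eq_AGS} equals $\lfloor\log_2|G|\rfloor+1$ since every fractional part vanishes. For everything else the workhorse is Lemma~\ref{prel_lem_lb2}: as soon as $G$ has a subgroup $H$ with $\Dav_{\pm}(H)=\lfloor\log_2|H|\rfloor+1$ and $G/H$ a $2$-group, we are done. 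Applying this by peeling off the full $2$-Sylow $G_2$ reduces us to the odd-order groups, \emph{provided} the odd part $G_{2'}$ is itself ``nondeficient''; the only deficient odd-order groups we shall meet are certain $3$-groups, and when $G_{2'}$ is one of those one instead peels off only part of $G_2$, keeping enough $3$-torsion inside $H$ (for instance $C_3\oplus C_6\cong C_3^2\oplus C_2\hookrightarrow C_3^2\oplus C_8$ with quotient $C_4$, or $C_3^2\oplus C_4\hookrightarrow C_6\oplus C_{12}$ with quotient $C_2$).

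It then remains to settle the odd-order groups (and the finitely many small mixed groups the previous step delegates to them). Here one splits according to whether the decomposition invariant $\Dav_{\pm}^{\ast}(G)$ already reaches $\lfloor\log_2|G|\rfloor+1$. When it does --- e.g.\ for $C_5^2$, $C_3\oplus C_9$, $C_3\oplus C_{21}$, $C_9^2$ --- Theorem~\ref{thm_standard} alone finishes. When $\Dav_{\pm}^{\ast}(G)$ falls one short of the upper bound, we appeal to the ``$+4$'' construction of Proposition~\ref{prop_constr}, its iterate Proposition~\ref{prop_lb}, and the packaged consequences Theorem~\ref{thm_lb} and Theorem~\ref{thm_33n}: these are tailored precisely to the deficient cases having a cyclic factor $\ge 4$ with $\{\log_2 m\}\ge\{\log_2 3\}$ alongside a factor $\ge 3$, and they cover $C_7^2$ and $C_6^2$ (Theorem~\ref{thm_lb} with $n=7$ resp.\ $n=6$) and $C_3\oplus C_6$, $C_3\oplus C_{15}$, $C_3\oplus C_{27}$ (Theorem~\ref{thm_33n}). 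The rank-three-or-more leftovers, such as $C_3^2\oplus C_6\cong C_3^3\oplus C_2$, are then obtained by feeding an already established rank-two value (here $\Dav_{\pm}(C_3\oplus C_6)=5$) back into Lemma~\ref{prel_lem_lb}.

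Finally one isolates the genuine exceptions. Theorem~\ref{thm_el3} gives $\Dav_{\pm}(G)=\rank(G)+1$ for elementary $3$-groups, which for $|G|\le100$ lies strictly below $\lfloor\log_2|G|\rfloor+1$ for $C_3^2$ and $C_3^3$ (values $3$ and $4$; the same mechanism yields $5$ for $C_3^4$, of order $81$). The non-elementary $3$-group $C_3^2\oplus C_9$ is handled by the dedicated Theorem~\ref{thm_339}, giving $6$. For $C_5\oplus C_{15}\cong C_3\oplus C_5^2$ one checks $\Dav_{\pm}^{\ast}=6$ (optimal decomposition $C_3\oplus C_5\oplus C_5$; Proposition~\ref{prop_constr} and Lemma~\ref{prel_lem_lb} do not improve on this), while the upper bound is $7$, so the value is left undetermined in $\{6,7\}$.

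The main obstacle is entirely organizational: one must certify that the enumeration of non-cyclic, non-$2$-group abelian groups of order at most $100$ is complete, and that for each such group not dispatched by Theorem~\ref{thm_standard} or Lemma~\ref{prel_lem_lb2}, exactly one of Propositions~\ref{prop_constr}--\ref{prop_lb} or Theorems~\ref{thm_lb}, \ref{thm_33n}, \ref{thm_339} applies with suitable parameters (and that, in the few rank-$\ge3$ cases, the chaining through Lemma~\ref{prel_lem_lb} bottoms out at an already-proved value). No single verification is deep, but the bookkeeping must be airtight so as to guarantee that the groups listed --- the elementary $3$-groups of rank at least $2$, together with $C_3^2\oplus C_9$ and $C_5\oplus C_{15}$ --- are the only ones that fail the formula or remain open.
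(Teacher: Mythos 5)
Your proposal follows essentially the same route as the paper's own proof: the universal upper bound of Theorem \ref{thm_standard} is matched against lower bounds coming from $\Dav_{\pm}^{\ast}$, from Lemma \ref{prel_lem_lb2} (peeling off $2$-parts), and from Theorems \ref{thm_lb}, \ref{thm_33n}, \ref{thm_el3} and \ref{thm_339}, with the remaining work being bookkeeping over the finitely many groups. The individual applications you list (for instance $C_6^2$ and $C_7^2$ via Theorem \ref{thm_lb}; $C_3\oplus C_6$, $C_3\oplus C_{15}$, $C_3\oplus C_{27}$ via Theorem \ref{thm_33n}; $C_3^2\oplus C_8$ and $C_6\oplus C_{12}$ via Lemma \ref{prel_lem_lb2}; $C_3^2\oplus C_6$ via Lemma \ref{prel_lem_lb}) all check out, and the treatment of $C_5\oplus C_{15}$ matches the paper's.

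There is, however, one computation you carry out but whose consequence you do not draw. As you note parenthetically, Theorem \ref{thm_el3} gives $\Dav_{\pm}(C_3^4)=5$, whereas $\lfloor \log_2 81\rfloor+1=7$. Since $|C_3^4|=81\le 100$, the group $C_3^4$ is a further exception, so the statement as formulated (listing only $C_3^2$, $C_3^3$, $C_3^2\oplus C_9$ and $C_5\oplus C_{15}$) is not literally correct, and your sentence asserting that the value lies strictly below the upper bound \emph{for $C_3^2$ and $C_3^3$} is inconsistent with your own parenthesis. To be fair, this is an oversight shared by the paper, whose proof likewise disposes of all elementary $3$-groups via Theorem \ref{thm_el3} and then omits $C_3^4$ from the exception list; but a complete argument must either add $C_3^4$ (with value $5$) to the exceptions or explain why it is excluded, rather than leave the contradiction buried in a parenthetical remark.
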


We give the proof of this result in a somewhat informal way, as it is not only a proof of this result but contains some additional information.

When below we say `the upper bound' we mean $\lfloor \log_2 |G| \rfloor + 1$ given by Theorem \ref{thm_standard} and by `the value' we mean the exact value of the plus-minus weighted Davenport constant (of some group at hand).

We recall that $\Dav_{\pm}(C_n)$ for each $n$ is known by Theorem \ref{thm_standard} and its value matches the upper bound.

The value of $\Dav_{\pm}(C_n^2)$ is known for $\{\log_2 n\} \ge \{\log_2 3\}$ by Theorem \ref{thm_lb} and for $\{\log_2 n\}< 1/2$ by Theorem \ref{thm_standard}; and the simple special cases for $n$ equal to $2$ and $3$.
The first value of $n$ not covered by this is $23$ the next are $46$ and $47$.
For all these $n$ except for $n=3$, the value matches the upper bound; for $n=3$ it is $3$.

The value of $\Dav_{\pm}(C_2 \oplus C_{2n})$ for $n \ge 2$ is known by Lemma \ref{prel_lem_lb2} and the result for cyclic groups and the value matches the upper bound.

The value of $\Dav_{\pm}(C_3 \oplus C_{3n})$ is known for $n \ge 2$ with $\{\log_2 3n \} < 1 -\{\log_2 3\}$ or $\{\log_2 3n \} \ge \{\log_2 3\}$ by Theorem \ref{thm_33n}. The first value of $n$ not covered is $15$ the next one $29$. The value matches the upper bound.

The value of $\Dav_{\pm}(C_4 \oplus C_{4n})$ for $n \ge 2$ is known by Lemma \ref{prel_lem_lb2} and the result for cyclic groups. The value matches the upper bound.

However, for  $\Dav_{\pm}(C_5 \oplus C_{5n})$ and $\Dav_{\pm}(C_7 \oplus C_{7n})$ the value is unknown already for $n=3$; for $n\le 2$ it is known by Lemma \ref{prel_lem_lb2} and the result for $C_n^2$ recalled above. Yet, of course the value is not unknown for each $n$ in these cases, and also for other groups of rank $2$ such results could be obtained.

We turn to groups of larger rank. If only the $2$-rank is greater than $2$, we can apply Lemma \ref{prel_lem_lb2} and in combination with the just established results for groups of rank $2$ we obtain the value in various case, in particular for all groups of the form $C_2 \oplus C_{2m} \oplus C_{2n}$ where $C_m \oplus C_n$ is a group where the value matches the upper bound. This includes all the groups mentioned above for which the value is known, except for $m=n=3$. However, for $C_2  \oplus C_6^2$ we can get the value by considering a subgroup of the form $C_2$ and again invoking Lemma \ref{prel_lem_lb2}; note that the value for $C_6^2$ also matches the upper bound.

Assume the $3$-rank is at least $3$. For elementary $3$-groups the result is known by Theorem \ref{thm_el3}. For $C_3\oplus C_3\oplus C_6$ we get the exact value by applying Lemma \ref{prel_lem_lb2}  with $C_3$ and $C_3\oplus C_6$ and the value matches the upper bound.
And, the value for $C_3\oplus C_3 \oplus C_9$ was determined in Theorem \ref{thm_339}.

In view of the above results, the only group of cardinality at most $100$ where the value thus remains unknown is $C_5 \oplus C_{15}$, that in this case it is $6$ or $7$ follows by Theorem \ref{thm_standard}. This completes the proof of Theorem \ref{thm_100}.

\subsection{Some problems} Of course, numerous questions still remain open, as is common for these types of constants. We mention some that we find of particular interest.

It could be interesting to investigate the value for  $C_5 \oplus C_{15}$, since it is the smallest group where the value of the plus-minus weighted Davenport constant is not known. Yet, some other groups seem even more interesting for a detailed investigation, specifically $C_{23}^2$ and possibly $C_3^3 \oplus C_{6}$ and $C_3^3 \oplus C_{9}$; the former on the grounds that knowing the value for it might give some evidence to formulate a conjecture for the value for $C_n^2$ in general, at least in case $n$ is prime, for the latter two since they are in some sense similar to the example $C_3^2 \oplus C_9$ where we were able to improve on the upper bound.

Based on Lemma \ref{prel_lem_lb} and Theorem \ref{thm_lb} one can easily construct groups where $\Dav_{\pm}(G) - \Dav_{\pm}^{\ast}(G)$ is arbitrarily large; for example for $G=C_7^{2r}$ the difference is at least $r$; that is a value essentially in the middle of the interval admissible by Corollary \ref{cor_davpm}.
For the dual question, $(\lfloor \log_2 |G| \rfloor  + 1) - \Dav_{\pm}(G)$, i.e., the difference to the upper bound from Theorem \ref{thm_standard}, the only type of groups for which this is know to become arbitrarily large are elementary $3$-groups, by Theorem \ref{thm_el3}. It is not clear to us if other examples exist or not.

We also do not yet have an example of a group where $\Dav_{\pm}(G)$ lies strictly between $\Dav_{\pm}^{\ast}(G)$ and $\lfloor \log_2 |G| \rfloor  + 1$, i.e. the upper and lower bound (relative to the optimal decomposition) in Theorem \ref{thm_standard}.
That such groups exists seems rather likely to us, as there seems no reason to expect that $\Dav_{\pm}(G) > \Dav_{\pm}^{\ast}(G)$, would always imply that $\Dav_{\pm}(G) = \lfloor \log_2 |G| \rfloor  + 1$ and that we do not have such an example seems merely due to the scarceness of examples of groups where the upper bound is \emph{known} not to be sharp.

\section*{Acknowledgements}

The authors would like to thank David Grynkiewicz for helpful discussions and remarks.

\end{document}